\newtheorem{remark}[theorem]{Remark}
\newcommand{\complex}{\mathbb{C}}
\newcommand{\nc}{\newcommand}
\nc{\twovec}[2]{\left(\begin{array}{c} #1 \\ #2\end{array} \right )}
\nc{\threevec}[3]{\!\left(\!\!\begin{array}{c} #1 \\ #2 \\ #3 \end{array}\!\!\right )\!}
\nc{\twomatrix}[4]{\left(\begin{array}{cc} #1 & #2\\ #3 & #4 \end{array} \right)}
\nc{\threematrix}[9]{{\!\left(\!\!\begin{array}{ccc} #1 & #2 & #3\\ #4 & #5 & #6 \\ #7 & #8 & #9 \end{array} \!\!\right)\!}}
\nc{\twodet}[4]{\!\left|\!\!\begin{array}{cc} #1\! & \!#2\\ #3\! & \!#4 \end{array} \!\!\right|}
\begin{document}



\bibliographystyle{plain}
\title{
PostLie algebra structures on the Lie algebra $\mathrm{sl}(2,\complex)$}

\author{
Yu Pan\thanks{School of Mathematics, Nankai University, Tianjin 300071, China (pauline65@126.com,
\newline
liuqing1989930@163.com).}
\and
Qing Liu\footnotemark[1]
\and
Chengming Bai \thanks{Chern Institute of Mathematics \& LPMC, Nankai University, Tianjin 300071, China (baicm@nankai.edu.cn)}
\and
Li Guo \thanks{Department of Mathematics and Computer Science, Rutgers University, Newark, NJ 07102, USA (liguo@rutgers.edu)}
}


\pagestyle{myheadings}
\markboth{Y.\ Pan, Q.\ Liu, C.\ Bai, and L. Guo}{PostLie Algebra Structures on the Lie Algebra $\mathrm{sl}(2,\complex)$}
\maketitle

\begin{abstract}
The PostLie algebra is an enriched structure of the Lie algebra that has recently arisen from operadic study. It is closely related to pre-Lie algebra, Rota-Baxter algebra, dendriform trialgebra, modified classical Yang-Baxter equations and integrable systems. We give a complete classification of PostLie algebra structures on the Lie algebra $\mathrm{sl}(2,\complex)$ up to isomorphism.
We first reduce the classification problem to solving an equation of $3\times 3$ matrices. To solve the latter problem, we make use of the classification of complex symmetric matrices up to the congruent action of orthogonal groups.
\end{abstract}

\begin{keywords}
Lie algebra, PostLie algebra, symmetric matrices, classification.
\end{keywords}

\begin{AMS}
17A30, 17A42, 17B60, 18D50
\end{AMS}

\section{Introduction}

We begin with recalling background on PostLie algebras~\cite{Va1}.

\begin{definition}
 {\rm
\begin{enumerate}
\item
 A {\bf (left) PostLie $\complex$-algebra} is a
$\complex$-vector space $L$ with two binary operations $\circ$
and $[,]$ which satisfy the relations:
\begin{equation}
[x,y]=-[y,x],\label{eq:polie1}
\end{equation}
\begin{equation}
[[x,y],z]+[[z,x],y]+[[y,z],x]=0,\label{eq:polie2}
\end{equation}
\begin{equation}
z\circ(y\circ x)-y\circ(z\circ x)+(y\circ z)\circ x-(z\circ y)\circ
x+[y,z]\circ x=0,\label{eq:polie3}
\end{equation}
\begin{equation}
z\circ[x,y]-[z\circ x,y]-[x,z\circ y]=0,\label{eq:polie4}
\end{equation}
for all $x,y\in L$.
\item
Let $(\mathfrak{G}(L),[,])$ denote the Lie algebra defined by Eq.~(\ref{eq:polie1}) and Eq.~(\ref{eq:polie2}). Call $(L,[,],\circ)$ a {\bf PostLie algebra on
$(\mathfrak{G}(L),[,])$}.
\item
Let $(\mathfrak{g},[,])$ be a Lie algebra. Two PostLie algebras $(\mathfrak{g},[,],\circ)$ and $\mathfrak{g},[,],\star)$ on the Lie algebra $\mathfrak{g}$ are called {\bf isomorphic on the Lie algebra $(\mathfrak{g},[,])$} if there is an automorphism $f$ of the Lie algebra $(\mathfrak{g},[,])$ such that
$$ f(x\circ y)=f(x)\star f(y), \quad \forall x,y\in \mathfrak{g}.$$
\end{enumerate}
}
\label{de:postlie}
\end{definition}

The concept of a PostLie algebra was recently introduced by Vallette from an operadic study~\cite{Va1}. It is closely related to pre-Lie algebra, Rota-Baxter algebra, dendriform trialgebra and modified classical Yang-Baxter equation, and has found applications to integrable systems~\cite{BGN}.
Further the corresponding operad plays the role of ``splitting" a binary quadratic operad into three pieces in terms of Manin black products~\cite{Va2}.

It is important to give examples before attempting to achieve certain classification in an algebraic structure. Considering the great challenge in giving a complete
classification for the well-known algebra structures such as Lie algebras and associative algebras, it is reasonable
to begin with studying the classification of PostLie algebras on some well-behaved Lie algebras, such as complex semisimple Lie algebras. Thus as a first step and as a guide for further investigations, we determine all isomorphic classes of PostLie algebra structures on the Lie algebra $(\mathrm{sl}(2,\complex),[,])$.

Let
\begin{equation}
e_1:=\frac{1}{2}\twomatrix{0}{1}{-1}{0}, \quad e_2:=\frac{1}{2\sqrt{-1}}\twomatrix{0}{1}{1}{0}, \quad e_3:=\frac{1}{2\sqrt{-1}}\twomatrix{1}{0}{0}{-1}.
\label{eq:base}
\end{equation}
They form a $\complex$-linear basis of $\mathrm{sl}(2,\complex)$ and determine the Lie algebra $(\mathrm{sl}(2,\complex),[,])$ through the relations
\begin{equation}
[e_{2},e_{3}]=e_{1},\quad [e_{3},e_{1}]=e_{2}, \quad [e_{1},e_{2}]=e_{3}.
\label{eq:eprod}
\end{equation}

Our main result of this paper is the following classification theorem.
\begin{theorem}
The following is a complete set of representatives for the isomorphic classes of PostLie algebras $(\mathrm{sl}(2,\complex), [,], \circ)$
 on the Lie algebra $(\mathrm{sl}(2,\complex),[,])$.
\begin{enumerate}
\item
$e_i\circ e_j=0, i,j=1,2,3$;
\item
$e_i\circ e_j = [-e_i,e_j], i,j=1,2,3$;
\item
$e_{1}\circ e_{i}=[-e_{1},e_{i}],
\quad e_{2}\circ e_{i}=[-\frac{1+\sqrt{-1}}{2}e_{2}+\frac{\sqrt{-1}-1}{2}e_{3},e_{i}],
\\
e_{3}\circ e_{i}=[-\frac{1+\sqrt{-1}}{2}e_{2}+\frac{\sqrt{-1}-1}{2}e_{3},e_{i}], \quad i=1,2,3;$
\item
$e_{1}\circ e_{i}=[(\sqrt{-1}-\frac{1}{2})e_{1}+(1-\frac{\sqrt{-1}}{2})e_{2},e_{i}],
\\
e_{2}\circ e_{i}=[(1+\frac{\sqrt{-1}}{2})e_{1}-(\sqrt{-1}+\frac{1}{2})e_{2},e_{i}],
\quad e_{3}\circ e_{i}=0,\quad i=1,2,3;$
\item
$e_{1}\circ e_{i}=[k e_{1},e_{i}],
\quad e_{2}\circ e_{i}=[-\frac{1}{2}e_{2}+\frac{\sqrt{-1}}{2}e_{3},e_{i}],
\quad e_{3}\circ e_{i}=[-\frac{\sqrt{-1}}{2}e_{2}-\frac{1}{2}e_{3},e_{i}],\\ i=1,2,3 ,\quad k\in \complex.$
\end{enumerate}
\label{thm:class}
\end{theorem}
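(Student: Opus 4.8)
The plan is to encode a PostLie structure on $\mathrm{sl}(2,\complex)$ as a single $3\times3$ matrix and then classify the resulting matrix equation up to the action of the automorphism group. First I would note that relation~\eqref{eq:polie4} says that for each $x$ the operator $L_x:=x\circ(-)$ is a derivation of $(\mathrm{sl}(2,\complex),[,])$; since this Lie algebra is simple every derivation is inner, so $L_x=\mathrm{ad}_{a(x)}$ for a unique $a(x)\in\mathrm{sl}(2,\complex)$, and $x\mapsto a(x)$ is linear because $\mathrm{ad}$ is injective. Let $A$ be the matrix of $a$ in the basis $e_1,e_2,e_3$, and identify $(\mathrm{sl}(2,\complex),[,])$ with $(\complex^3,\times)$ through this basis using~\eqref{eq:eprod}, so that $x\circ y=a(x)\times y$. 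Then~\eqref{eq:polie3}, after cancelling the injective map $\mathrm{ad}$, reads $a(x)\times a(y)=a\bigl(a(x)\times y+x\times a(y)+x\times y\bigr)$; applying the standard identities $(Au)\times(Av)=\widehat A\,(u\times v)$, with $\widehat A$ the cofactor matrix, and $(Bu)\times v+u\times(Bv)=(\mathrm{tr}(B)I-B^{T})(u\times v)$, and using that $u\times v$ spans $\complex^3$, this becomes the single equation
\[
\widehat A=(\mathrm{tr}(A)+1)A-AA^{T},
\]
which I will call $(\star)$; conversely every solution of $(\star)$ defines a PostLie structure by $e_i\circ e_j=[a(e_i),e_j]$. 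Finally, in the model $(\complex^3,\times)$ the automorphism group of the Lie algebra is exactly $SO(3,\complex)$ (an invertible $f$ with $f(u\times v)=fu\times fv$ forces $f^{T}f=I$ and $\det f=1$), and since such $f$ satisfies $f^{-1}=f^{T}$, Definition~\ref{de:postlie}(3) translates into: two solutions $A,A'$ of $(\star)$ give isomorphic PostLie algebras iff $A'=fAf^{T}$ for some $f\in SO(3,\complex)$. So the problem is reduced to classifying solutions of $(\star)$ up to $SO(3,\complex)$-congruence.

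Next I would split off the symmetric part. Put $S:=A+A^{T}$ and $D:=A-A^{T}$, and write $D=\mathrm{ad}_{k}$. Rewriting $\widehat A=(A^{T})^{2}-\mathrm{tr}(A)A^{T}+\sigma_{2}(A)I$ via Cayley--Hamilton ($\sigma_2(A)$ the sum of the principal $2\times2$ minors) and separating $(\star)$ into its symmetric and antisymmetric parts shows that $(\star)$ is equivalent to the pair
\[
\text{(a)}\quad SD+DS=-2D,\qquad\quad
\text{(b)}\quad S^{2}+\tfrac12(DS-SD)-(2\mathrm{tr}(A)+1)S+2\sigma_{2}(A)I=0,
\]
where (a) amounts to $Sk=(\mathrm{tr}(S)+2)k$ when $k\neq0$. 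If $D=0$ then $A$ is symmetric and $(\star)$ collapses to $2A^{2}-(2\mathrm{tr}(A)+1)A+\sigma_{2}(A)I=0$; then the minimal polynomial of $A$ has degree $\le2$, and matching it against the actual trace and second symmetric function of the eigenvalues forces $A=0$ or $A=-I$ — these are the structures (1) and (2).

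The bulk of the work is the case $D\neq0$. Here I would first prove that $k$ is anisotropic, $k_1^2+k_2^2+k_3^2\neq0$: from (a) one has $Ak=\tfrac12(Sk+Dk)=(\mathrm{tr}(A)+1)k$, and applying (b) to the eigenvector $k$ gives $\sigma_2(A)=-(\mathrm{tr}(A)+1)$, whence by Cayley--Hamilton $\det A=0$ and the eigenvalues of $A$ are $\{0,\,\mathrm{tr}(A)+1,\,-1\}$; on the other hand, if $k$ were isotropic then passing to a hyperbolic basis and using the self-adjointness of $S$ and skew-adjointness of $D$ together with (a), (b) forces $\mathrm{tr}(A)=-3/2$ and makes $A$ conjugate to $-\tfrac12 I$ plus a nilpotent matrix, contradicting $\det A=0$. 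With $k$ anisotropic I would use $SO(3,\complex)$ to put $k$ on a coordinate axis, so $D=\mu_0\,\mathrm{ad}_{e_3}$ with $\mu_0\neq0$; then (a) makes $e_3$ an eigenvector of $S$ with eigenvalue $\mathrm{tr}(S)+2$, and a short trace computation from (b) shows the other two eigenvalues of $S$ are both $-1$. Thus the symmetric $2\times2$ block of $S$ on $e_3^{\perp}$ has $-1$ as a double eigenvalue; using the residual freedom (the stabiliser of $\complex e_3$ in $SO(3,\complex)$, acting on this block by orthogonal congruence) together with the classification of complex symmetric matrices up to the congruent action of orthogonal groups, I would bring this block to $-I$ or to $-I+N$ with $N$ a fixed nonzero nilpotent symmetric matrix — the case of two distinct eigenvalues being ruled out because the off-diagonal part of (b) would force $D=0$. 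Substituting back into (b) with the true value of $\sigma_2(A)$ gives $\mu_0^{2}=-1$ in both cases, with $\mathrm{tr}(A)$ arbitrary in the first (the one-parameter family (5)) and $\mathrm{tr}(A)\in\{-1,-2\}$ in the second (the structures (4) and (3)). Unwinding $A=\tfrac12(S+D)$ then yields exactly the matrices listed.

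To finish I would check the five families are pairwise non-isomorphic: (1) and (2) are the only solutions with $D=0$; $\mathrm{tr}(A)$ is a congruence invariant, which already separates the members of (5) from one another and from (3) and (4) except when it happens to coincide, in which case the Jordan type of the symmetric matrix $S=A+A^{T}$ (diagonalisable for (5), but with a $2\times2$ Jordan block at $-1$ for (3) and (4)) tells them apart. The main obstacle is this last case, $D\neq0$: the reduction to $(\star)$ and the symmetric case $D=0$ are routine, but excluding the isotropic possibility for $k$ and then running the finite case analysis of equation (b) under the residual orthogonal symmetry — the point where the normal forms of complex symmetric matrices under the orthogonal group, and the question of which sign choices are absorbed by the ambient $SO(3,\complex)$, genuinely enter — requires care.
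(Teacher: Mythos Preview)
Your reduction to the matrix equation $(\star)$ and the identification of the isomorphism relation with $SO(3,\complex)$-congruence are correct and match the paper (their Eq.~(\ref{eq:mateq}) is the transpose of your $(\star)$, reflecting the row/column convention). Your symmetric--antisymmetric splitting into equations (a), (b) is also correct, and the case $D=0$ does force $A\in\{0,-I\}$ as you claim.

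Where you diverge from the paper is in the organization of the $D\neq 0$ case. The paper does a case analysis on $\mathrm{rank}(A)$: for rank~3 it shows $A$ is symmetric; for rank~2 it works with the symmetric matrix $A^{T}A$; for rank~1 with $\tfrac12(A+A^{T})$; in each case it runs through the Gantmacher canonical forms (Proposition~\ref{pp:clale}) and checks them against Eq.~(\ref{eq:mateq}). Your route via $D=\mathrm{ad}_k$ is more structural and shorter: the identities $Ak=(t{+}1)k$, $\sigma_2(A)=-(t{+}1)$, $\det A=0$, and the fact that $S\!\upharpoonright_{k^{\perp}}$ has $-1$ as a double eigenvalue are exactly the right invariants, and the dichotomy ``$S_0=-I_2$'' vs.\ ``$S_0=-I_2+N_0$'' cleanly produces the family~(5) and the pair~(3),(4). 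The paper's approach is more mechanical and involves several dead-end subcases; yours extracts the answer with less computation but leans more on getting the normal forms of $2\times2$ complex symmetric matrices under the residual $O(2,\complex)$ right.

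The one genuine soft spot is your isotropic argument. Your claim that isotropic $k$ forces $\mathrm{tr}(A)=-3/2$ is correct (it follows from comparing $\mathrm{tr}(S^2)$ computed from the parametrization of $S$ under (a) with the value $4t^2+8t+6$ coming from the trace of (b)), but the further assertion that $A$ becomes conjugate to $-\tfrac12 I$ plus a nilpotent is not right: with $t=-3/2$ the eigenvalues $\{0,t{+}1,-1\}=\{0,-1/2,-1\}$ are already distinct, so $A$ is diagonalizable. A cleaner way to dispose of the isotropic case, which fits your framework, is to compute $k\cdot k$ directly: from $\sigma_2(A)=\tfrac12\bigl(t^2-\mathrm{tr}(A^2)\bigr)$, $\mathrm{tr}(A^2)=\tfrac14(\mathrm{tr}(S^2)+\mathrm{tr}(D^2))$, $\mathrm{tr}(D^2)=-2(k\cdot k)$, together with $\sigma_2(A)=-(t{+}1)$ and $\mathrm{tr}(S^2)=4t^2+8t+6$, one gets $k\cdot k=-1$ identically. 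This simultaneously rules out isotropic $k$ and pins down $\mu_0^2=-1$, so you can drop the hyperbolic-basis digression entirely.
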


As pointed out in~\cite{BGN}, such a classification problem is related to the classification of the modified classical Yang-Baxter equation
introduced by Semenov-Tian-Shansky in~\cite{S}. While the classification in~\cite{S} works for the so-called graded r-matrices in a finite-dimensional semisimple Lie algebra, in terms of extensions of linear operators associated to the parabolic subalgebras, our classification in the case that we are considering is given without any constraint graded conditions and is more precise in the sense that the structural constants are spelled out explicitly.

Our proof of the theorem consists of two steps:
\begin{enumerate}
\item[{\bf Step 1.}]
Give a one-one correspondence from the isomorphic classes of PostLie algebra structures on $(\mathrm{sl}(2,\complex),[,])$ to the congruent classes of solutions of the matrix equation Eq.~(\ref{eq:mateq}). This will be carried out in Section~\ref{sec:tomatrix}.
\item[{\bf Step 2.}]
Classify the congruent classes of solutions of the matrix equation Eq.~(\ref{eq:mateq}). This will be carried out in Section~\ref{sec:matclass}.

For this purpose, we make use of a result on the canonical forms for complex symmetric matrices under the congruent action of $\mathrm{SO}(3,\complex)$ (Proposition~\ref{pp:clale}).
When a solution $A$ of Eq.~(\ref{eq:mateq}) has full rank, it can be shown that $A$ is symmetric. Thus we only need to check against Eq.~(\ref{eq:mateq}) the complex symmetric matrices with full rank which are in the above canonical forms. When a solution $A$ does not have full rank, $A$ is no longer symmetric. Then we try to relate Eq.~(\ref{eq:mateq}) of $A$ to equations of various symmetrizations of $A$, such as $A'A$ and $A'+A$ so that we can still apply Proposition~\ref{pp:clale}. This strategy turns out to work quite nicely.
\end{enumerate}

\section{A matrix equation from PostLie algebras}
\label{sec:tomatrix}

In this section, we carry out the first step in establishing Theorem~\ref{thm:class} by proving Theorem~\ref{thm:mat1}. We begin with recalling two results on PostLie algebras.

\begin{lemma} {\rm (\cite{Va1})} Let $(L,[, ],\circ)$ be a PostLie algebra. Then the binary operation given by
\begin{equation}
\{ x,y\}:=x\circ y-y\circ x+[x,y],\;\;\forall x,y\in L,
\end{equation}
defines a Lie algebra.
\label{lem:va}
\end{lemma}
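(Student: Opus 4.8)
The plan is to verify directly that the bracket $\{x,y\} := x\circ y - y\circ x + [x,y]$ satisfies the two defining axioms of a Lie algebra: antisymmetry and the Jacobi identity. Antisymmetry is immediate from the definition, since swapping $x$ and $y$ negates each of the three terms (using Eq.~(\ref{eq:polie1}) for the bracket $[,]$), so $\{y,x\} = y\circ x - x\circ y + [y,x] = -\{x,y\}$. This leaves the Jacobi identity as the substantive part of the argument.

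For the Jacobi identity, I would expand $\{\{x,y\},z\} + \{\{z,x\},y\} + \{\{y,z\},x\}$ using the definition of $\{,\}$, producing a sum of terms of three types: purely $[,]$-bracket terms, mixed terms involving one $\circ$ and one $[,]$, and purely $\circ$ terms. The purely $[,]$-bracket terms cancel among themselves by the Jacobi identity Eq.~(\ref{eq:polie2}) for $(\mathfrak{G}(L),[,])$. The mixed terms, which are expressions of the form $[z\circ x, y]$, $[x, z\circ y]$, and $z\circ[x,y]$ (cyclically), are handled by repeated use of the compatibility relation Eq.~(\ref{eq:polie4}), namely $z\circ[x,y] = [z\circ x, y] + [x, z\circ y]$; after substituting this in wherever a term $z\circ[x,y]$ appears and collecting, the mixed contributions should cancel cyclically. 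Finally, the purely $\circ$ terms — expressions built from double products like $z\circ(y\circ x)$, $(y\circ z)\circ x$, etc. — are exactly what Eq.~(\ref{eq:polie3}) is designed to control: grouping them according to which variable is the rightmost argument and applying Eq.~(\ref{eq:polie3}) (together with the $[,]\circ x$ term it contains, which pairs with a leftover mixed term) should force the remaining sum to vanish.

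The main obstacle, and the step requiring the most care, is the bookkeeping in the purely-$\circ$ and mixed sectors: Eq.~(\ref{eq:polie3}) mixes a $[,]\circ x$ term into an otherwise purely-$\circ$ relation, so the "purely $\circ$" terms and "mixed" terms are not cleanly separable and must be reconciled together. The clean way to organize this is to fix the rightmost argument (say $x$) and collect all terms of the form $(\,\cdot\,)\circ x$ across the cyclic sum; these should assemble into an instance of Eq.~(\ref{eq:polie3}) plus a term $[y,z]\circ x$ that is cancelled using Eq.~(\ref{eq:polie4}) applied to $x\circ[y,z]$ (equivalently its antisymmetrization), and then one cycles over the choice of rightmost argument. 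I would present this as a single aligned computation, carefully tracking signs, rather than appealing to any structural shortcut; the identity Eq.~(\ref{eq:polie3}) is precisely the "left Leibniz-type" relation that makes the symmetrized bracket a Lie bracket, and Eq.~(\ref{eq:polie4}) is precisely the relation that makes $\circ$ act by derivations of $[,]$, so both axioms feed in exactly once.
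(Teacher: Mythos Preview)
Your direct verification is correct: expanding the Jacobiator of $\{,\}$, the $[[,],]$ terms cancel by Eq.~(\ref{eq:polie2}); after applying Eq.~(\ref{eq:polie4}) to each $-z\circ[x,y]$ term the $[a\circ b,c]$-type terms cancel in pairs, leaving $[x,y]\circ z + [y,z]\circ x + [z,x]\circ y$; and these leftover terms combine with the double-$\circ$ terms, grouped by innermost argument, into three cyclic instances of Eq.~(\ref{eq:polie3}), each vanishing. The only remark is that the paper does not supply its own proof of this lemma at all---it is quoted from \cite{Va1}---so there is no alternative argument to compare against; your computation is the standard one and would serve as a self-contained proof.
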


\begin{proposition} {\rm (\cite{BGN})} Let $(\mathfrak g,[,])$ be a semisimple Lie algebra. Then any PostLie algebra structure $(\mathfrak g, [, ], \circ)$
(on $(\mathfrak g,[,])$) is given by
\begin{equation}
x\circ y=[f(x),y],\;\;\forall x,y\in \mathfrak g,\end{equation}
where $f:\mathfrak g\rightarrow \mathfrak g$ is a linear map satisfying
\begin{equation}
[f(x),f(y)]=f([f(x),y]+[x,f(y)]+[x,y]),\;\;\forall x,y\in \mathfrak g.\label{eq:RB1}
\end{equation}\label{prop:map}
\end{proposition}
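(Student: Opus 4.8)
The plan is to prove the two assertions of Proposition~\ref{prop:map} in turn: first that every PostLie structure on a semisimple Lie algebra $\mathfrak g$ has the form $x\circ y=[f(x),y]$ for a (necessarily unique) linear map $f$, and second that the PostLie axioms are equivalent to Eq.~(\ref{eq:RB1}) for that $f$. For the first assertion I would fix $x\in\mathfrak g$ and consider the linear operator $L_x:y\mapsto x\circ y$ on $\mathfrak g$. The key is to show $L_x$ is a derivation of the Lie algebra $(\mathfrak g,[,])$: this is exactly the content of axiom~(\ref{eq:polie4}), which says $x\circ[y,z]=[x\circ y,z]+[y,x\circ z]$. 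Since $\mathfrak g$ is semisimple, every derivation is inner, so there is a unique $f(x)\in\mathfrak g$ with $L_x=\mathrm{ad}(f(x))$, i.e.\ $x\circ y=[f(x),y]$ for all $y$; uniqueness of $f(x)$ follows from the centerlessness of $\mathfrak g$ (semisimple), which also forces $x\mapsto f(x)$ to be linear because $x\mapsto L_x$ is.

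For the second assertion I would substitute $x\circ y=[f(x),y]$ into the remaining PostLie axioms and read off what survives. Axioms~(\ref{eq:polie1}) and~(\ref{eq:polie2}) are automatic since $[,]$ is already the given Lie bracket. Axiom~(\ref{eq:polie4}) is automatic too, since $[f(x),[y,z]]=[[f(x),y],z]+[y,[f(x),z]]$ is just the Jacobi identity for $\mathrm{ad}(f(x))$. So the whole content is in axiom~(\ref{eq:polie3}): rewriting $z\circ(y\circ x)-y\circ(z\circ x)+(y\circ z)\circ x-(z\circ y)\circ x+[y,z]\circ x=0$ using $u\circ v=[f(u),v]$ gives
\begin{equation}
\big[f(z),[f(y),x]\big]-\big[f(y),[f(z),x]\big]+\big[f([f(y),z]-[f(z),y]+[y,z]),x\big]=0.
\end{equation}
By the Jacobi identity the first two terms combine to $\big[[f(z),f(y)],x\big]$, so the identity becomes $\big[[f(z),f(y)]-f([f(z),y]+[z,f(y)]+[z,y]),x\big]=0$ for all $x$. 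Since $\mathfrak g$ has trivial center, the bracketed element must vanish, which is precisely Eq.~(\ref{eq:RB1}) (with the roles of $y,z$ relabelled). Conversely, if $f$ satisfies~(\ref{eq:RB1}) then reversing these steps shows $x\circ y:=[f(x),y]$ satisfies all four PostLie axioms.

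The only real subtlety is the use of semisimplicity, which enters in exactly two standard ways: all derivations of $\mathfrak g$ are inner (Whitehead's lemma / $H^1(\mathfrak g,\mathfrak g)=0$), and $\mathfrak g$ is centerless, so $\mathrm{ad}$ is injective. Neither step is computationally heavy; the main thing to be careful about is bookkeeping the signs and the permutation of $x,y,z$ when matching axiom~(\ref{eq:polie3}) against Eq.~(\ref{eq:RB1}), and noting that Lemma~\ref{lem:va} is consistent with this description since $\{x,y\}=x\circ y-y\circ x+[x,y]=[f(x)+x,y]-[f(y)+y,x]$ corresponds to the map $x\mapsto f(x)+x$, which by~(\ref{eq:RB1}) is a homomorphism-like twist. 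I do not expect any genuine obstacle here; the proposition is essentially a translation of the axioms through the inner-derivation property.
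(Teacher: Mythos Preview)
Your argument is correct, and in fact supplies what the paper does not: the paper quotes this proposition from \cite{BGN} without proof, so there is no in-paper argument to compare against. The only piece the paper does reproduce is the converse direction (Eq.~(\ref{eq:RB1}) $\Rightarrow$ PostLie axioms), carried out in the proof of Theorem~\ref{thm:mat1} for $\mathrm{sl}(2,\complex)$; that reasoning matches yours exactly (Eq.~(\ref{eq:RB1}) applied to $z$ yields axiom~(\ref{eq:polie3}), and $\mathrm{ad}(f(x))$ being a derivation yields axiom~(\ref{eq:polie4})).

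One small slip in your closing remark: the computation $\{x,y\}=[f(x)+x,y]-[f(y)+y,x]$ is off by a factor, since the right-hand side equals $[f(x),y]-[f(y),x]+2[x,y]$ rather than $[f(x),y]-[f(y),x]+[x,y]$. This is only your consistency check, not the proof itself; the correct statement (and the one the paper records in Remark~\ref{rk:rb}.(\ref{it:rb2})) is that Eq.~(\ref{eq:RB1}) says precisely $f(\{x,y\})=[f(x),f(y)]$, i.e.\ $f$ is a Lie homomorphism from $(\mathfrak g,\{,\})$ to $(\mathfrak g,[,])$.
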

\begin{remark}
{\rm
\begin{enumerate}
\item
A linear map $f$ satisfying Eq.~(\ref{eq:RB1}) is called a {\bf Rota-Baxter operator of weight $1$}~\cite{G}.
\label{it:rb1}
\item
We also note that Eq.~(\ref{eq:RB1}) is equivalent to the condition that $[f(x),f(y)]=f(\{x,y\})$, for any $x,y\in \mathfrak g$,
that is, $f$ is a homomorphism between the two Lie algebras $(\mathfrak g, [,\ ])$ and $(\mathfrak g, \{, \})$ from Lemma~\ref{lem:va}.
\label{it:rb2}
\end{enumerate}
}
\label{rk:rb}
\end{remark}

\begin{theorem}
Let $\circ$ be a binary operation on $\mathrm{sl}(2,\complex)$. The following statements are equivalent.
\begin{enumerate}
\item
The triple $(\mathrm{sl}(2,\complex),[,],\circ)$ is a PostLie algebra on $(\mathrm{sl}(2,\complex),[,])$;
\label{it:postlie}
\item
The operation $\circ$ is given by
\begin{equation}
x\circ y=[f(x),y], \quad \forall x,y\in \mathrm{sl}(2,\complex),
\label{eq:circf}
\end{equation}
where $f: \mathrm{sl}(2,\complex)\to \mathrm{sl}(2,\complex)$ is a linear map satisfying Eq.~(\ref{eq:RB1});
\label{it:map}
\item
The operation $\circ$ is given by
$$x\circ y=[f(x),y], \quad \forall x,y\in \mathrm{sl}(2,\complex),$$
where $f: \mathrm{sl}(2,\complex)\to \mathrm{sl}(2,\complex)$ is a linear map whose matrix $A$ with respect to the basis $\{e_1,e_2,e_3\}$ satisfies
\begin{equation}
A'((\mathrm{tr}(A)+1)I_3-A)=A^*.
\label{eq:mateq}
\end{equation}\
Here $A'$ is the transpose matrix of $A$ and $A^*$ is the adjugate matrix of $A$.
\label{it:mat}
\end{enumerate}
Furthermore, the linear map $f$ (and hence its matrix $A$) in Item~(\ref{it:mat}) is unique for a given $\circ$.
\label{thm:mat1}
\end{theorem}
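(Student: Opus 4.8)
The plan is to prove the chain of equivalences (\ref{it:postlie}) $\Leftrightarrow$ (\ref{it:map}) $\Leftrightarrow$ (\ref{it:mat}), together with the uniqueness claim. The implication (\ref{it:postlie}) $\Rightarrow$ (\ref{it:map}) is immediate from Proposition~\ref{prop:map}, since $\mathrm{sl}(2,\complex)$ is semisimple; here $f$ is exactly the linear map furnished there, and Eq.~(\ref{eq:RB1}) is its defining property. Conversely, given a linear map $f$ satisfying Eq.~(\ref{eq:RB1}) and defining $\circ$ by Eq.~(\ref{eq:circf}), I would verify directly that relations (\ref{eq:polie1})--(\ref{eq:polie4}) hold: (\ref{eq:polie1}) and (\ref{eq:polie2}) are just the Lie algebra axioms for $(\mathrm{sl}(2,\complex),[,])$, which are independent of $\circ$; (\ref{eq:polie4}) is the Jacobi identity after substituting $z\circ x=[f(z),x]$ and expanding; and (\ref{eq:polie3}) is where Eq.~(\ref{eq:RB1}) is used, reducing after substitution and the Jacobi identity to the identity $[[f(y),f(z)],x]=[f([f(y),z]+[y,f(z)]+[y,z]),x]$, which holds for all $x$ precisely because Eq.~(\ref{eq:RB1}) holds. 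This establishes (\ref{it:postlie}) $\Leftrightarrow$ (\ref{it:map}).

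For (\ref{it:map}) $\Leftrightarrow$ (\ref{it:mat}), the task is to translate Eq.~(\ref{eq:RB1}) into the matrix identity Eq.~(\ref{eq:mateq}) using the basis $\{e_1,e_2,e_3\}$ and the structure constants Eq.~(\ref{eq:eprod}). The key observation is that, in this basis, the bracket on $\mathrm{sl}(2,\complex)$ is given by the cross product: $[u,v]$ corresponds to $u\times v$ in $\complex^3$, since Eq.~(\ref{eq:eprod}) is exactly $e_2\times e_3=e_1$ and cyclic permutations. Under this identification, the ad-invariant (Killing-type) form is the standard symmetric bilinear form, and we may use the classical vector identities $u\times(v\times w)=\langle u,w\rangle v-\langle u,v\rangle w$ and, crucially, the identity $(Mu)\times(Mv)=(\operatorname{cof} M)(u\times v)=(M^{*})^{\prime}(u\times v)$ valid for any $3\times 3$ matrix $M$, where $\operatorname{cof} M$ is the cofactor matrix so that $(\operatorname{cof} M)^{\prime}=M^{*}$ is the adjugate. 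Applying this with $M=A$ to the left side of Eq.~(\ref{eq:RB1}), one gets $[f(x),f(y)]\leftrightarrow (A^{*})^{\prime}(x\times y)$. The right side $f([f(x),y]+[x,f(y)]+[x,y])$ becomes $A\big((Ax)\times y+x\times(Ay)+x\times y\big)$; one then checks the operator identity $(Ax)\times y+x\times(Ay)=(\operatorname{tr}(A)I_3-A^{\prime})(x\times y)$, which is a standard fact about the action of a matrix on $\Lambda^2$ of a $3$-dimensional space, so the right side is $A(\operatorname{tr}(A)I_3-A^{\prime}+I_3)(x\times y)=A((\operatorname{tr}(A)+1)I_3-A^{\prime})(x\times y)$. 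Since Eq.~(\ref{eq:RB1}) must hold for all $x,y$, equivalently $x\times y$ ranges over all of $\complex^3$, we get the operator identity $(A^{*})^{\prime}=A((\operatorname{tr}(A)+1)I_3-A^{\prime})$; transposing both sides and using $(\operatorname{tr}(A)+1)I_3-A=((\operatorname{tr}(A)+1)I_3-A^{\prime})^{\prime}$ yields exactly Eq.~(\ref{eq:mateq}). Uniqueness of $A$ (hence of $f$) for a given $\circ$ follows because if $[f(x),y]=[g(x),y]$ for all $x,y$ then $f(x)-g(x)$ lies in the center of $\mathrm{sl}(2,\complex)$, which is zero by semisimplicity, so $f=g$.

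The main obstacle is getting the two cross-product operator identities and the bookkeeping with $\operatorname{cof}$ versus adjugate and transpose exactly right, since a misplaced transpose changes Eq.~(\ref{eq:mateq}). To keep this clean I would record once and for all the identifications: the bracket is $\times$; for any $M$, $(Mu)\times(Mv)=(\operatorname{cof} M)(u\times v)$ and $(Mu)\times v+u\times(Mv)=(\operatorname{tr}(M)\,I_3-M^{\prime})(u\times v)$ — the second is obtained by polarizing the first or by a direct $3\times 3$ computation, and both can be verified on the basis vectors $e_i\times e_j$. Everything else is substitution and the Jacobi identity. Alternatively, one can bypass the vector-calculus identities and simply expand Eq.~(\ref{eq:RB1}) on the nine pairs $(e_i,e_j)$ directly in coordinates; this is more tedious but entirely mechanical and yields the same nine scalar equations that assemble into Eq.~(\ref{eq:mateq}).
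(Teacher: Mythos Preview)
Your argument for (\ref{it:postlie}) $\Leftrightarrow$ (\ref{it:map}) and for uniqueness is the same as the paper's. For (\ref{it:map}) $\Leftrightarrow$ (\ref{it:mat}) you take a genuinely different route: the paper simply substitutes $f(e_i)=\sum_j a_{ij}e_j$ into Eq.~(\ref{eq:RB1}) for the three pairs $(e_2,e_3),(e_3,e_1),(e_1,e_2)$ and reads off the nine scalar identities, which it then packages into Eq.~(\ref{eq:mateq}); you instead identify the bracket with the cross product on $\complex^3$ and invoke the operator identities $(Mu)\times(Mv)=(\operatorname{cof}M)(u\times v)$ and $(Mu)\times v+u\times(Mv)=(\operatorname{tr}(M)I_3-M')(u\times v)$. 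Your approach is more conceptual and explains \emph{why} the adjugate and the trace appear; the paper's is purely mechanical but sidesteps the transpose bookkeeping you yourself flag as the main risk.

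That risk is realized in one place. With the paper's convention $f(e_i)=\sum_j a_{ij}e_j$, the map $f$ acts on \emph{column} coordinate vectors by $A'$, not by $A$ (equivalently, $A$ acts on row vectors from the right). Hence in your cross-product computation one must take $M=A'$: this gives $[f(x),f(y)]\leftrightarrow(\operatorname{cof}A')(x\times y)=A^{*}(x\times y)$ on the left and $A'\big((\operatorname{tr}(A)+1)I_3-A\big)(x\times y)$ on the right, which is Eq.~(\ref{eq:mateq}) directly, with no final transposition needed. As you have it, with $M=A$, the identity $(A^{*})'=A\big((\operatorname{tr}(A)+1)I_3-A'\big)$ transposes to $A^{*}=\big((\operatorname{tr}(A)+1)I_3-A\big)A'$, which is \emph{not} Eq.~(\ref{eq:mateq}); the two differ by $A'A$ versus $AA'$ and only coincide when $A$ is normal. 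Your ``alternative'' of direct coordinate expansion is exactly the paper's proof, and by bilinearity and antisymmetry only the three basis pairs are needed rather than nine.
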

Because of their uniqueness, the linear map $f$ (resp. its matrix $A$) in the theorem is called the {\bf linear map} (resp. the {\bf matrix) of the PostLie algebra $(\mathrm{sl}(2,\complex),[,],\circ)$} and is denoted $f_\circ$ (resp. $A_\circ$).

\begin{proof}
(\ref{it:postlie}) $\Longrightarrow$ (\ref{it:map}) is a special case of Proposition~\ref{prop:map}.
\smallskip

\noindent
(\ref{it:map}) $\Longrightarrow$ (\ref{it:postlie}): Applying Eq.~(\ref{eq:RB1}) to $z\in \mathrm{sl}(2,\complex)$ gives Eq.~(\ref{eq:polie3}). Since the left multiplication $y\mapsto [f(x),y], y\in \mathrm{sl}(2,\complex)$ is a derivation, from Eq.~(\ref{eq:circf}) we obtain Eq.~(\ref{eq:polie4}). \smallskip

\noindent
(\ref{it:map}) $\Longrightarrow$ (\ref{it:mat}):
Set
$$f(e_i)=\sum_{j=1}^3a_{ij}e_j,\;\;{\rm where}\;\;A=(a_{ij}),\;\;
  a_{ij}\in \complex,i,j=1,2,3.$$
Substituting the above equation into Eq.~(\ref{eq:RB1}), we obtain
{\allowdisplaybreaks
\begin{eqnarray*}        &&f([a_{21}e_1+a_{22}e_2+a_{23}e_3,e_3] -[a_{31}e_1+a_{32}e_2+a_{33}e_3,e_2]+[e_2,e_3]) \\
&=&[a_{21}e_1+a_{22}e_2+a_{23}e_3,\quad a_{31}e_1+a_{32}e_2+a_{33}e_3], \\
&&f([a_{31}e_1+a_{32}e_2+a_{33}e_3,e_1] -[a_{11}e_1+a_{12}e_2+a_{13}e_3,e_3]+[e_3,e_1]) \\
&=&[a_{31}e_1+a_{32}e_2+a_{33}e_3,\quad a_{11}e_1+a_{12}e_2+a_{13}e_3], \\
&&f([a_{11}e_1+a_{12}e_2+a_{13}e_3,e_2] -[a_{21}e_1+a_{22}e_2+a_{23}e_3,e_1]+[e_1,e_2]) \\
&=&[a_{11}e_1+a_{12}e_2+a_{13}e_3,\quad a_{21}e_1+a_{22}e_2+a_{23}e_3].
\end{eqnarray*}
}
Expanding the right hand sides, we have
\begin{eqnarray*}
f((a_{22}+a_{33}+ 1)e_1-a_{21}e_2-a_{31}e_3) &=&\twodet{a_{22}}{a_{23}}{a_{32}}{a_{33}}e_1 +\twodet{a_{23}}{a_{21}}{a_{33}}{a_{31}}e_2 +\twodet{a_{21}}{a_{22}}{a_{31}}{a_{32}}e_3, \\
f(-a_{12}e_1+(a_{11}+a_{33}+ 1)e_2-a_{32}e_3) &=&\twodet{a_{32}}{a_{33}}{a_{12}}{a_{13}}e_1 +\twodet{a_{33}}{a_{31}}{a_{13}}{a_{11}}e_2 +\twodet{a_{31}}{a_{32}}{a_{11}}{a_{12}}e_3, \\
f(-a_{13}e_1-a_{23}e_2+(a_{11}+a_{22}+ 1)e_3) &=&\twodet{a_{12}}{a_{13}}{a_{22}}{a_{23}}e_1 +\twodet{a_{13}}{a_{11}}{a_{23}}{a_{21}}e_2 +\twodet{a_{11}}{a_{12}}{a_{21}}{a_{22}}e_3.
\end{eqnarray*}
By the linearity of $f$, we can express these equations in the matrix equation
$$\threematrix{a_{22}+a_{33}+1}{-a_{12}}{-a_{13}}{-a_{21}}{a_{11}+a_{33}+1}{-a_{23}}{-a_{31}}{-a_{32}}{a_{11}+a_{22}+1} \threevec{f(e_1)}{f(e_2)}{f(e_3)} = (A^*)' \threevec{e_1}{e_2}{e_3}.$$
That is,
$$((\mathrm{tr}(A)+1)I_3-A')A\threevec{e_1}{e_2}{e_3}=(A^*)' \threevec{e_1}{e_2}{e_3}.$$
Since $\{e_1,e_2,e_3\}$ is a basis of $\mathrm{sl}(2,\complex)$, we obtain
$$((\mathrm{tr}(A)+1)I_3-A')A=(A^*)'.$$
This is Eq.~(\ref{eq:mateq}).
\smallskip

\noindent
(\ref{it:mat}) $\Longrightarrow$ (\ref{it:map}):
Reversing the above calculation, from Eq.~(\ref{eq:mateq}), we have
$$\threevec{f(\{ e_2, e_3\})}{f(\{ e_3, e_1\})}{f(\{ e_1, e_2\})}
=((\mathrm{tr}(A)+1)I_3-A')A\threevec{e_1}{e_2}{e_3}
=(A^*)' \threevec{e_1}{e_2}{e_3}
=\threevec{{[f(e_2),f(e_3)]}}{{[f(e_3),f(e_1)]}}{{[f(e_1),f(e_2)]}}.$$
So by Remark~\ref{rk:rb}.(\ref{it:rb2}), Eq.~(\ref{eq:RB1}) holds.
\smallskip

Finally, suppose there are linear maps $f$ and $g$ on $\mathrm{sl}(2,\complex)$ such that $x\circ y=[f(x),y]=[g(x),y], x, y\in \mathrm{sl}(2,\complex)$. Since the center of $\mathrm{sl}(2,\complex)$ is zero, we have $f(x)=g(x), x\in \mathrm{sl}(2,\complex)$. This proves the uniqueness of $f$.
\end{proof}

It is known~\cite{J} that an automorphism of $\mathrm{sl}(2,\complex)$ is given as a linear isomorphism: $X\mapsto AXA^{-1}$, $X\in \mathrm{sl}(2,\complex)$,
where $A$ is an invertible $2\times 2$ matrix. We need another description of such an automorphism in terms of $3\times 3$ matrices under a basis of the 3-dimensional Lie algebra $\mathrm{sl}(2,\complex)$. For the lack of a reference, we include a proof in the appendix.

\begin{lemma} A linear map on $\mathrm{sl}(2,\complex)$ is a Lie algebra automorphism if and only if its matrix with respect to the basis $\{e_1,e_2,e_3\}$ is in $\mathrm{SO}(3,\complex)$.\label{le:ag}
\end{lemma}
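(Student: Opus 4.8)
The plan is to prove Lemma~\ref{le:ag} by establishing a ``dictionary'' between the $2\times 2$ picture (automorphisms as conjugation $X\mapsto PXP^{-1}$ with $P\in\mathrm{GL}(2,\complex)$, equivalently $P\in\mathrm{SL}(2,\complex)$ since scaling $P$ does not change conjugation) and the $3\times 3$ picture (the matrix of the linear map in the basis $\{e_1,e_2,e_3\}$). The key observation is that the Killing form, or more concretely the symmetric bilinear form for which $\{e_1,e_2,e_3\}$ is an \emph{orthonormal} basis, is preserved by any Lie algebra automorphism. Indeed, from the bracket relations~(\ref{eq:eprod}) one computes directly that $\kappa(e_i,e_j):=-2\,\mathrm{tr}(e_ie_j)$ (or a suitable normalization of the trace form / Killing form) equals $\delta_{ij}$; since the trace form is intrinsically defined in terms of the bracket, every automorphism $\phi$ of $(\mathrm{sl}(2,\complex),[,])$ satisfies $\kappa(\phi(x),\phi(y))=\kappa(x,y)$. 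Writing $\phi$ as the matrix $A$ in the basis $\{e_1,e_2,e_3\}$, invariance of the form with Gram matrix $I_3$ translates exactly into $A'A=I_3$, i.e. $A\in\mathrm{O}(3,\complex)$.

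The remaining point is to pin down the determinant: I would show $\det A=1$, so that $A\in\mathrm{SO}(3,\complex)$ rather than merely $\mathrm{O}(3,\complex)$. For this I would use the adjoint realization: for $P\in\mathrm{SL}(2,\complex)$ the map $\mathrm{Ad}_P\colon X\mapsto PXP^{-1}$ is the matrix $A$ in question, and the assignment $P\mapsto \mathrm{Ad}_P$ is a group homomorphism $\mathrm{SL}(2,\complex)\to \mathrm{O}(3,\complex)$ which is continuous (indeed polynomial in the entries of $P$); since $\mathrm{SL}(2,\complex)$ is connected and $\mathrm{Ad}_{I_2}=I_3$ has determinant $1$, the image lies in the connected component $\mathrm{SO}(3,\complex)$. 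Hence every automorphism of the conjugation type has matrix in $\mathrm{SO}(3,\complex)$; combined with the cited fact~\cite{J} that \emph{all} automorphisms are of this type, this gives the ``only if'' direction.

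For the ``if'' direction I must show that every $A\in\mathrm{SO}(3,\complex)$ actually arises this way, i.e. the homomorphism $\mathrm{Ad}\colon\mathrm{SL}(2,\complex)\to\mathrm{SO}(3,\complex)$ is surjective. One clean route: its kernel is $\{\pm I_2\}$ (since $PXP^{-1}=X$ for all $X\in\mathrm{sl}(2,\complex)$ forces $P$ central in $\mathrm{GL}(2,\complex)$, hence scalar, hence $\pm I_2$ given $\det P=1$), so the image is a subgroup of $\mathrm{SO}(3,\complex)$ of dimension $3=\dim\mathrm{SO}(3,\complex)$; being also an algebraic/closed subgroup of the same dimension in a connected group, it is all of $\mathrm{SO}(3,\complex)$. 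Alternatively, and perhaps more in the elementary spirit of the appendix, one can argue directly: a matrix in $\mathrm{SO}(3,\complex)$ preserving the form $I_3$ corresponds under the exceptional isomorphism to an element of $\mathrm{SL}(2,\complex)/\{\pm I_2\}$, which can be checked by explicitly exhibiting, for the generators of $\mathrm{SO}(3,\complex)$ (complex ``rotations'' in the three coordinate planes), matching elements of $\mathrm{SL}(2,\complex)$ whose conjugation action realizes them — e.g. $\mathrm{diag}(t,t^{-1})$ acting on $\{e_1,e_2,e_3\}$ fixes $e_3$ and rotates $e_1,e_2$.

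I expect the main obstacle to be the surjectivity half (the ``if'' direction): verifying that the Gram matrix of the trace form is $I_3$ and deducing $A'A=I_3$ is a short computation, and connectedness of $\mathrm{SL}(2,\complex)$ handily forces $\det A=1$, but showing that \emph{every} $A\in\mathrm{SO}(3,\complex)$ is hit requires either the dimension/kernel count above or an explicit generating-set computation for $\mathrm{SO}(3,\complex)$. Since the paper relegates this to an appendix and only needs the statement, I would present the dimension-of-image argument (kernel $=\{\pm I_2\}$, image closed of full dimension in the connected group $\mathrm{SO}(3,\complex)$) as the cleanest, falling back on the explicit one-parameter-subgroup computations if a fully elementary, algebraic-group-free proof is preferred.
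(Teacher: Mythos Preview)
Your proposal is correct but takes a genuinely different route from the paper. The paper's appendix argues directly from the bracket relations~(\ref{eq:eprod}): writing out $[\varphi(e_i),\varphi(e_j)]$ in terms of the matrix $T=(t_{ij})$ of $\varphi$ yields the cofactors of $T$, so that $\varphi$ preserves all three brackets iff $(T^*)'=T$, i.e.\ $T'=T^*$. From this single identity one reads off both $\det T=(\det T)^2$, hence $\det T=1$, and $T'=T^*=(\det T)T^{-1}=T^{-1}$, hence $T\in\mathrm{SO}(3,\complex)$. The converse is the same computation in reverse: for $T\in\mathrm{SO}(3,\complex)$ one has $T^*=T'$, so the associated linear map preserves each generating bracket and is therefore an automorphism. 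No appeal to Jacobson's description of $\mathrm{Aut}(\mathrm{sl}(2,\complex))$, to the Killing form, or to Lie-group connectedness is needed.

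By contrast, your argument packages the orthogonality via Killing-form invariance (clean and conceptual), but then spends its real effort on the determinant and the surjectivity: you invoke connectedness of $\mathrm{SL}(2,\complex)$ together with the Jacobson fact to force $\det A=1$, and for the ``if'' direction you pass through surjectivity of $\mathrm{Ad}\colon\mathrm{SL}(2,\complex)\to\mathrm{SO}(3,\complex)$ via a dimension/closed-subgroup count. This is heavier machinery and is not self-contained, whereas the paper's cofactor identity $T'=T^*$ simultaneously delivers orthogonality, the determinant condition, and the converse in one elementary stroke. Your approach has the virtue of explaining \emph{why} $\mathrm{SO}(3,\complex)$ appears (it is the isometry group of the Killing form in this basis) and of generalizing readily; the paper's approach has the virtue of being a two-line matrix computation with no outside input.
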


\begin{theorem}
\begin{enumerate}
\item\label{it:iff}
Two PostLie algebras $(\mathrm{sl}(2,\complex),[,],\circ)$ and $(\mathrm{sl}(2,\complex),[,],\star)$ on the Lie algebra $(\mathrm{sl}(2,\complex),[,])$ are isomorphic if and only if their matrices $A_\circ$ and $A_\star$ are congruent under
$\mathrm{SO}(3,\complex)$, that is, there is $T\in \mathrm{SO}(3,\complex)$ such that $A_\star=T'A_\circ T$.
\item
If $A$ is the matrix of a PostLie algebra, then all the matrices in its congruent class under $\mathrm{SO}(3,\complex)$ are matrices of PostLie algebras.
Thus $\mathrm{SO}(3,\complex)$ acts on the matrices of PostLie algebras on $(\mathrm{sl}(2,\complex),[,])$.
\label{it:full}
\item
The map that sends a solution of Eq.~(\ref{eq:mateq}) to its corresponding PostLie algebra in Theorem~\ref{thm:mat1} induces a bijection between congruent classes $($under $\mathrm{SO}(3,\complex)$$)$ of solutions of Eq.~(\ref{eq:mateq})
and isomorphic classes of PostLie algebra structures on $(\mathrm{sl}(2,\complex),[,])$.
\label{it:1-1}
\end{enumerate}
\label{thm:mat2}
\end{theorem}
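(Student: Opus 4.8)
The plan is to establish the three parts of Theorem~\ref{thm:mat2} in order, leaning heavily on the uniqueness of the matrix $A_\circ$ furnished by Theorem~\ref{thm:mat1} and on the identification of $\mathrm{Aut}(\mathrm{sl}(2,\complex))$ with $\mathrm{SO}(3,\complex)$ from Lemma~\ref{le:ag}. First I would prove part~(\ref{it:iff}). Suppose $f$ is a Lie algebra automorphism with matrix $T$ in the basis $\{e_1,e_2,e_3\}$, so by Lemma~\ref{le:ag} we have $T\in\mathrm{SO}(3,\complex)$, hence $T^{-1}=T'$. The isomorphism condition $f(x\circ y)=f(x)\star f(y)$ means $f([f_\circ(x),y])=[f_\star(f(x)),f(y)]$. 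Since $f$ is a Lie automorphism the left side equals $[f f_\circ(x), f(y)]$, so $[f f_\circ(x),f(y)]=[f_\star f(x),f(y)]$ for all $y$; because $\mathrm{sl}(2,\complex)$ has trivial center this forces $f f_\circ = f_\star f$, i.e. $f_\star = f f_\circ f^{-1}$. Translating to matrices (and being careful about the row-vector convention used in the derivation of Eq.~(\ref{eq:mateq}), where $f$ acts by right multiplication of the coordinate row on the transpose), this reads $A_\star = T^{-1} A_\circ T = T' A_\circ T$. Conversely, given $T\in\mathrm{SO}(3,\complex)$ with $A_\star=T'A_\circ T$, the map $f$ with matrix $T$ is a Lie automorphism by Lemma~\ref{le:ag}, and reversing the computation shows it intertwines $\circ$ and $\star$; one should also check $\star$ is again a PostLie product, which follows once part~(\ref{it:full}) is in hand (or directly: conjugating a solution of Eq.~(\ref{eq:RB1}) by a Lie automorphism gives another solution).

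Next I would prove part~(\ref{it:full}). Let $A=A_\circ$ satisfy Eq.~(\ref{eq:mateq}) and let $T\in\mathrm{SO}(3,\complex)$; set $B=T'AT$. The cleanest route is to work at the level of the linear map: if $f_\circ$ satisfies Eq.~(\ref{eq:RB1}) and $\varphi$ is the Lie automorphism with matrix $T$, then $g:=\varphi f_\circ \varphi^{-1}$ also satisfies Eq.~(\ref{eq:RB1}), since applying $\varphi^{-1}$ to both sides of $[g(x),g(y)]=g(\{x,y\}_{g})$ and using that $\varphi$ preserves $[\,,\,]$ reduces it to Eq.~(\ref{eq:RB1}) for $f_\circ$ evaluated at $\varphi^{-1}(x),\varphi^{-1}(y)$. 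Then by Theorem~\ref{thm:mat1}.(\ref{it:map})$\Rightarrow$(\ref{it:mat}) the matrix of $g$, which is $T^{-1}AT=T'AT=B$, satisfies Eq.~(\ref{eq:mateq}); equivalently $x\mapsto[g(x),y]$ is a PostLie product with matrix $B$. (Alternatively one can verify the stability of Eq.~(\ref{eq:mateq}) under $A\mapsto T'AT$ purely matrix-theoretically, using $\mathrm{tr}(T'AT)=\mathrm{tr}(A)$, $\det T=1$, $T'=T^{-1}$, and the identity $(T'AT)^*=T^*A^*(T')^*=T^{-1}A^*T$ for $T\in\mathrm{SO}(3,\complex)$; but the operator-level argument is shorter and less error-prone.) This shows the congruence orbit of a solution consists of solutions, so $\mathrm{SO}(3,\complex)$ acts on the set of such matrices.

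Finally part~(\ref{it:1-1}) is essentially a bookkeeping corollary. By Theorem~\ref{thm:mat1} the assignment $\circ\mapsto A_\circ$ is a well-defined bijection from PostLie structures on $(\mathrm{sl}(2,\complex),[,])$ to solutions of Eq.~(\ref{eq:mateq}): it is injective because $A_\circ$ determines $f_\circ$ and hence $\circ=[f_\circ(\cdot),\cdot]$, and surjective by the equivalence (\ref{it:mat})$\Leftrightarrow$(\ref{it:postlie}). Part~(\ref{it:full}) shows this bijection carries the $\mathrm{SO}(3,\complex)$-action on PostLie structures (by transport along Lie automorphisms, $\circ\mapsto\star$ with $f_\star=\varphi f_\circ\varphi^{-1}$) to the congruence action on matrices, and part~(\ref{it:iff}) shows two PostLie structures are isomorphic exactly when they lie in the same $\mathrm{SO}(3,\complex)$-orbit under this action. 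Passing to quotients, $\circ\mapsto A_\circ$ therefore descends to a bijection between isomorphism classes of PostLie structures and $\mathrm{SO}(3,\complex)$-congruence classes of solutions of Eq.~(\ref{eq:mateq}).

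I expect the only genuine subtlety — and the main place to be careful rather than the main obstacle — to be the transpose/convention bookkeeping: the derivation of Eq.~(\ref{eq:mateq}) has $f$ acting on the coordinate column $(f(e_1),f(e_2),f(e_3))'$ by the matrix $A$ on the left, so one must confirm that conjugation of the operator $f_\circ$ by a Lie automorphism with matrix $T$ corresponds to $A\mapsto T^{-1}AT$ and that, since $T\in\mathrm{SO}(3,\complex)$ gives $T^{-1}=T'$, this is precisely the congruence $A\mapsto T'AT$ appearing in the statement. Everything else is a direct consequence of trivial center (for uniqueness and for stripping the outer bracket), of Lemma~\ref{le:ag}, and of Theorem~\ref{thm:mat1}.
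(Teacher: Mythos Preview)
Your proposal is correct and essentially identical in approach to the paper's proof. The only minor variation is in part~(\ref{it:full}), where you prefer the operator-level argument (conjugation of a Rota--Baxter map by a Lie automorphism) while the paper carries out the direct matrix verification using $T'=T^*$ for $T\in\mathrm{SO}(3,\complex)$---an alternative you already note parenthetically.
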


\begin{proof}
(\ref{it:iff}).
($\Longrightarrow$)
According to the definition, two PostLie algebras $(\mathrm{sl}(2,\complex),[,],\circ)$ and $(\mathrm{sl}(2,\complex),[,],\star)$ are isomorphic on the Lie algebra $(\mathrm{sl}(2,\complex),[,])$ means that there is a linear isomorphism $\varphi: \mathrm{sl}(2,\complex)\to \mathrm{sl}(2,\complex)$ such that
\begin{eqnarray}
\varphi([x,y])&=&[\varphi(x),\varphi(y)],
\label{eq:mor1}
\\
\varphi(x\circ y)&=&\varphi(x)\star\varphi(y), \quad \forall x,y\in \mathrm{sl}(2,\complex).
\label{eq:mor2}
\end{eqnarray}

Let $f_\circ$ and $f_\star$ be the linear maps of the PostLie algebras $(\mathrm{sl}(2,\complex),[,],\circ)$ and $(\mathrm{sl}(2,\complex)$, $[,],\star)$.
By Eq. ~(\ref{eq:mor2}), we have
$$\varphi([f_\circ(x),y])=[f_\star(\varphi(x)),\varphi(y)], \quad \forall x,y\in \mathrm{sl}(2,\complex).$$
By Eq.~(\ref{eq:mor1}), we obtain
$$[\varphi(f_\circ(x)),\varphi(y)]=[f_\star(\varphi(x)),\varphi(y)], \quad \forall x,y\in \mathrm{sl}(2,\complex).$$
Since the center of $\mathrm{sl}(2,\complex)$ is zero, we have
$$\varphi(f_\circ(x))=f_\star(\varphi(x)), \quad \forall x\in \mathrm{sl}(2,\complex).$$
Thus $\varphi f_\circ=f_\star \varphi$, that is,
$A_\circ T=T A_\star$
for the matrix $T$ of $\varphi$ with respect to the basis $\{e_1,e_2,e_3\}$ of $\mathrm{sl}(2,\complex)$. Since $T$ is in $\mathrm{SO}(3,\complex)$ by Lemma~\ref{le:ag}, we have
$A_\star=T' A_\circ T.$
\smallskip

\noindent
($\Longleftarrow$)
Suppose there is $T\in \mathrm{SO}(3,\complex)$ such that $A_\star=T' A_\circ T$. Let $\psi$ be the linear operator on $\mathrm{sl}(2,\complex)$ whose matrix with respect to the basis $\{e_1,e_2,e_3\}$ is $T$. By Lemma~\ref{le:ag}, $\psi$ is an automorphism of the Lie algebra $\mathrm{sl}(2,\complex)$. Thus Eq.~(\ref{eq:mor1}) holds.
Furthermore, from $A_\star=T'A_\circ T$ we obtain $\psi f_\circ=f_\star \psi$. Thus we have
\begin{eqnarray*}
\psi(x\circ y)&=&\psi([f_\circ(x),y])=[\psi(f_\circ(x),\psi(y))]=[(\psi f_\circ)(x),\psi(y)]\\
&=&[(f_\star \psi)(x),\psi(y)]=[f_\star(\psi(x)),\psi(y)]=\psi(x)\star\psi(y),
\end{eqnarray*}
proving Eq.~(\ref{eq:mor2}).
\medskip

\noindent
(\ref{it:full}).
If $A$ is the matrix of a PostLie algebra on the Lie algebra $(\mathrm{sl}(2,\complex),[,])$, then A is a complex matrix satisfying Eq.~(\ref{eq:mateq}). Thus for any $T\in \mathrm{SO}(3,\complex)$, we have
$$T' A' T T'((\mathrm{tr}(A)+1)I_3-A)T=T' A^* T.$$
Since $T'=T^*$, this gives
$$(T' A T)'((\mathrm{tr}(A)+1)I_3-T' A T)=T^* A^* (T^*)'=T^* A^* (T')^*=(T'A T)^*,$$
showing that $B=T'AT$ also satisfies Eq.~(\ref{eq:mateq}). So $B$ is also the matrix of a PostLie algebra on $\mathrm{sl}(2,\complex)$.

\medskip

\noindent
(\ref{it:1-1}).
By Theorem~\ref{thm:mat1}, we have a bijective map  from the set of solutions of Eq.~(\ref{eq:mateq}) to the set of PostLie algebras on $(\mathrm{sl}(2,\complex),[,])$. By Item~(\ref{it:iff}), this bijective map induces a bijective map from the set of congruent classes (under the action of $\mathrm{SO}(3,\complex)$) of the solutions of Eq.~(\ref{eq:mateq}) to the set of isomorphic classes of PostLie algebras on $(\mathrm{sl}(2,\complex),[,])$.
\end{proof}

\section{Classification of the matrix solutions}
\label{sec:matclass}

According to Theorem~\ref{thm:mat2}.(\ref{it:1-1}), in order to prove our main Theorem~\ref{thm:class},
we only need to prove the following theorem on congruent classes of solutions of Eq.~(\ref{eq:mateq}).

\begin{theorem}
A complete list of representatives $A$ of congruent classes for the solutions of Eq.~(\ref{eq:mateq}) is given as follows:
\begin{eqnarray*}
& &\threematrix{0}{0}{0}{0}{0}{0}{0}{0}{0}; \quad
\threematrix{-1}{0}{0}{0}{-1}{0}{0}{0}{-1}; \quad  \threematrix{-1}{0}{0}{0}{-\frac{1+\sqrt{-1}} {2}}{\frac{\sqrt{-1}-1}{2}}{0} {-\frac{1+\sqrt{-1}}{2}}{\frac{\sqrt{-1}-1}{2}};\ \\
& &\threematrix{k}{0}{0}{0}{-\frac{1}{2}} {\frac{\sqrt{-1}}{2}}{0}{-\frac{\sqrt{-1}}{2}} {-\frac{1}{2}}, k\in \complex; \quad
\threematrix{-\frac{1}{2}+\sqrt{-1}} {1-\frac{\sqrt{-1}}{2}}{0}{1+\frac{\sqrt{-1}}{2}}{-\frac{1}{2}-\sqrt{-1}}{0}{0}{0}{0}.
\end{eqnarray*}
\label{thm:matclass}
\end{theorem}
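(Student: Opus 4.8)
The plan is to solve Eq.~(\ref{eq:mateq}), namely $A'((\mathrm{tr}(A)+1)I_3-A)=A^*$, by splitting into cases according to $\mathrm{rank}(A)$, and in each case using $\mathrm{SO}(3,\complex)$-congruence to put $A$ into a normal form. The first observation I would record is a symmetry fact: when $A$ is invertible, multiply Eq.~(\ref{eq:mateq}) on the right by $A$ and use $A^* A = A A^* = \det(A) I_3$ to get $A'((\mathrm{tr}(A)+1)I_3 - A)A = \det(A) I_3$; since the right side is symmetric, the left side $A'((\mathrm{tr}(A)+1)A - A^2)$ must equal its transpose $((\mathrm{tr}(A)+1)A' - (A')^2)A$, and a short manipulation forces $A$ to be symmetric (this is the ``full rank $\Rightarrow$ symmetric'' claim flagged in the introduction). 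Once $A$ is symmetric I can invoke the normal-form result Proposition~\ref{pp:clale} for complex symmetric matrices under $\mathrm{SO}(3,\complex)$-congruence, reducing to checking finitely many parametrized normal forms against Eq.~(\ref{eq:mateq}); this yields the candidates $0$ (vacuously handled in the rank $0$ case), $-I_3$, and the one-parameter family $\mathrm{diag}(k,\ast,\ast)$ together with the third listed matrix.

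Next I would handle the degenerate ranks. If $\mathrm{rank}(A)=0$ then $A=0$, giving the zero matrix. If $\mathrm{rank}(A)=2$, then $A^*$ has rank $1$; I would study the symmetrization trick mentioned in the introduction: relate Eq.~(\ref{eq:mateq}) to an equation satisfied by a symmetric matrix such as $A'A$ or $A+A'$, so that Proposition~\ref{pp:clale} again applies to pin down the congruence class of that symmetrization, and then lift back to determine $A$ itself up to $\mathrm{SO}(3,\complex)$. Comparing $\mathrm{tr}(A)$, $\det(A)=0$, and the rank of $A^*$ should cut the list of possibilities down to the fifth listed representative (the rank-$2$, trace $-1$ matrix with a $2\times 2$ block). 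If $\mathrm{rank}(A)=1$, then $A^*=0$, so Eq.~(\ref{eq:mateq}) becomes $A'((\mathrm{tr}(A)+1)I_3-A)=0$; writing $A=uv'$ for column vectors $u,v$ with $v'u=\mathrm{tr}(A)$, this is $v u'((\mathrm{tr}(A)+1)I_3 - uv')=0$, i.e. $((\mathrm{tr}(A)+1)u' - (v'u)v')=0$ as a covector (after dividing by the nonzero $v$), which forces a rigid relation between $u$ and $v$; up to $\mathrm{SO}(3,\complex)$-congruence this produces the fourth-type matrix $\mathrm{diag}(k,\ldots)$ with the appropriate $k$ (the rank-$1$ specialization), so no new representative beyond the family already listed.

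Finally I would verify that the five families found are genuinely pairwise non-congruent under $\mathrm{SO}(3,\complex)$: the discrete cases are separated by $\mathrm{rank}$, $\mathrm{tr}$, and $\det$ (all $\mathrm{SO}(3,\complex)$-invariants), and within the one-parameter family $\mathrm{diag}(k,-\tfrac12+\tfrac{\sqrt{-1}}{2}\cdots)$ the value of $k$ is recovered as an eigenvalue / invariant of the matrix, so distinct $k$ give distinct classes; one must also check the $k$-family does not collide with the fixed representatives except possibly at special values of $k$ (e.g. $k=-1$), and argue that even there the off-diagonal block distinguishes them. I expect the main obstacle to be the $\mathrm{rank}(A)=2$ case: there $A$ is not symmetric, so the reduction to Proposition~\ref{pp:clale} via a symmetrization is the delicate part — one has to choose the right symmetric companion of $A$, control how much information about $A$ survives in it, and then reconstruct $A$ from its normal form plus the constraints imposed by Eq.~(\ref{eq:mateq}), all while staying inside $\mathrm{SO}(3,\complex)$ rather than the full orthogonal or general linear group.
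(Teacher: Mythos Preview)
Your overall architecture (split by $\mathrm{rank}(A)$; full rank $\Rightarrow$ symmetric; use Proposition~\ref{pp:clale} on a symmetrized companion in the degenerate cases) is exactly the paper's. However, you have misfiled the representatives by rank, and this hides a real gap.

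Compute the ranks: the third listed matrix has its last two rows equal, so $\mathrm{rank}=2$, and it is \emph{not} symmetric (the $(2,3)$ and $(3,2)$ entries differ); the one-parameter family has its lower-right $2\times 2$ block singular, so $\mathrm{rank}=2$ for $k\neq 0$ and $\mathrm{rank}=1$ for $k=0$; the fifth listed matrix has third row zero and singular top $2\times 2$ block, so $\mathrm{rank}=1$. Hence the rank-$3$ case yields only $-I_3$; the rank-$2$ case must produce both the third matrix and the family with $k\neq 0$ (via $A'A$, as in the paper); and the rank-$1$ case must produce both the $k=0$ member of the family \emph{and} the fifth matrix. Your proposal reverses the roles of the third and fifth matrices and claims the rank-$3$ symmetric analysis already captures the family and the third matrix, which it cannot.

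The concrete gap is in your rank-$1$ argument. From $A=uv'$ and $A^*=0$ you correctly reach $(v'u+1)u'=(u'u)v'$. If $u'u\neq 0$ then $v\parallel u$ and $A$ is symmetric, which the paper rules out. In the remaining case $u'u=0$ and $v'u=-1$; but this does \emph{not} ``force a rigid relation between $u$ and $v$'': it fixes only the isotropy of $u$ and one pairing, leaving a genuine one-parameter freedom in $v$ modulo $\mathrm{SO}(3,\complex)$. The paper resolves this by applying Proposition~\ref{pp:clale} to $\tfrac12(A+A')$ (which has rank exactly $2$ in the non-symmetric rank-$1$ case) and finds two inequivalent outcomes: one congruent to the $k=0$ family member, and one the fifth matrix. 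Your claim ``no new representative beyond the family'' is therefore incorrect; the fifth matrix is precisely what is missed.
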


The proof of this theorem will be presented in this section.
After discussing a preparatory result on congruent classes of complex symmetric matrices, we will divide our proof into the three cases when the rank of $A$ is three, two or one. Since the case when the rank of $A$ is zero gives us the trivial solution, we will not discuss it further.

The next result is essentially due to~\cite[Chapter XI, Corollary 2]{Ga}. We modify it in its general form and spell out the details in the dimension three case to fit the application in this paper.
\begin{proposition}
Consider the $k\times k$ complex matrix
$$D_k:=\left (\begin{matrix}
0& 1& 0 &\ldots &0 & 0\\
1& 0& 1 &\ldots &0 & 0 \\
0& 1& 0 &\ldots &0 & 0 \\
\vdots & \vdots & \vdots &\ddots &\vdots& \vdots\\
0 & 0 & 0 &\ldots & 0 & 1\\
0 & 0 & 0& \ldots & 1 & 0
\end{matrix} \right )
+ \sqrt{-1}\,\left (\begin{matrix}
0&0 &\cdots & 0&1& 0\\
0&0 &\cdots & 1 & 0 & -1\\
0&0 &\cdots & 0 & -1 & 0 \\
\vdots & \vdots& \ddots & \vdots &\vdots&\vdots \\
1 & 0 & \cdots & 0 & 0 & 0 \\
0 & -1 & \cdots & 0 & 0 & 0
\end{matrix}
\right )
$$
\begin{enumerate}
\item
For a complex symmetric $n\times n$ matrix $A$ with elementary factors $(\lambda-\lambda_1)^{k_1},\cdots, (\lambda-\lambda_t)^{k_t}$, $k_1+\cdots+k_t=n$, there exists $T\in O(n,\complex)$ such that
\begin{equation}
TAT^{-1}=P:=diag(\lambda_1 I_{k_1}+ D_{k_1},\lambda_2 I_{k_2}+D_{k_2},\cdots,\lambda_t I_{k_t}+D_{k_t}).
\label{eq:matP}
\end{equation}
\label{it:csm}
\item
When $n$ is odd, the above matrix $T$ can be chosen to be in $SO(n,\complex)$.
\label{it:csmo}
\item
For each $3\times 3$ complex symmetric matrix $A$, there exists $T\in SO(3,\complex)$ and a unique $P$ is the following list such that
$P=TAT^{-1}$.
\begin{enumerate}
\item
When $\mathrm{r}(A)=3$,
\begin{eqnarray}
& \left (\begin{matrix}\lambda_1&0&0\\ 0&\lambda_2&0\\
0&0&\lambda_3\end{matrix}\right);  \threematrix{\lambda_1}{0}{0}{0}{\lambda_2 +\sqrt{-1}\hspace{-.25cm}}{1}{0}{1}{\hspace{-.25cm}\lambda_2-\sqrt{-1}};
&
\notag \\
&\threematrix{\lambda}{1+\sqrt{-1}}{0} {1+\sqrt{-1}}{\lambda}{1- \sqrt{-1}}{0}{1-\sqrt{-1}}{\lambda}. & \label{eq:3list3}
\end{eqnarray}
\item
When $\mathrm{r}(A)=2$,
\begin{eqnarray}
&\threematrix{\lambda_1}{0}{0}{0} {\lambda_2}{0}{0}{0}{0}; \ \threematrix{\lambda+\sqrt{-1}\hspace{-.4cm}}{1}{0}{1} {\hspace{-.4cm}\lambda-\sqrt{-1}}{0}{0}{0}{0};&
\notag \\
&\threematrix{\lambda}{0}{0}{0}{\sqrt{-1}} {1}{0}{1}{-\sqrt{-1}};
\
\threematrix{0}{\hspace{-.25cm}1+\sqrt{-1}}{0} {1+\sqrt{-1}\hspace{-.25cm}}{0}{\hspace{-.25cm}1-\sqrt{-1}} {0}{\hspace{-.25cm}1-\sqrt{-1}}{0}. &
\label{eq:3list2}
\end{eqnarray}
\item
When $\mathrm{r}(A)=1$,
\begin{eqnarray}
\threematrix{\lambda}{0}{0}{0}{0} {0}{0}{0}{0};
\quad
\threematrix{\sqrt{-1}}{1}{0}{1} {-\sqrt{-1}}{0}{0}{0}{0} \label{eq:3list1}.
\end{eqnarray}
\item
When $\mathrm{r}(A)=0,$
\begin{eqnarray}
\threematrix{0}{0}{0}{0}{0} {0}{0}{0}{0}.
\label{eq:3list0}
\end{eqnarray}
\end{enumerate}
Here all constants are non-zero.
\label{it:csm3}
\end{enumerate}
\label{pp:clale}
\end{proposition}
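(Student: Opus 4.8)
The plan is to recall the mechanism behind Part~(1) (which for general $n$ is essentially \cite[Ch.~XI]{Ga}), both to fix notation and because Part~(3) reuses it, and then to read off Parts~(2) and~(3) cheaply. The starting point is two observations about the block $D_k$: it is complex symmetric (visible from the definition), nilpotent with characteristic polynomial $\lambda^k$ (a direct computation of the coefficients; immediate for $k=2,3$), and $\mathrm{r}(D_k)=k-1$, so that the eigenvalue $0$ has geometric multiplicity $1$ and hence a single Jordan block. Consequently $\lambda_iI_{k_i}+D_{k_i}$ is symmetric with the single elementary divisor $(\lambda-\lambda_i)^{k_i}$, so the block matrix $P$ of~\eqref{eq:matP} is complex symmetric, has exactly the prescribed elementary divisors --- the same ones as $A$ --- and is therefore \emph{similar} to $A$.

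The technical core of Part~(1) is the lemma that \emph{two similar complex symmetric matrices are orthogonally similar}. I would prove it thus: if $B=SAS^{-1}$ with $A,B$ symmetric and $S$ invertible, then transposing gives $B=(S')^{-1}AS'$, so $C:=S'S$ commutes with $A$, and $C$ is symmetric and invertible (its determinant is $(\det S)^2\neq 0$), so $0\notin\operatorname{spec}(C)$. By Hermite interpolation there is a polynomial $p$ with $p(z)^2=z$ to high enough order at each eigenvalue of $C$ that $R:=p(C)$ satisfies $R^2=C$; moreover $R'=p(C')=p(C)=R$ and $R$ commutes with $A$, being a polynomial in $C$. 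Then $T:=SR^{-1}$ is orthogonal, since $T'T=(R^{-1})'S'SR^{-1}=(R^{-1})'CR^{-1}=(R^{-1})'R'RR^{-1}=I_n$, and $TAT^{-1}=SR^{-1}ARS^{-1}=SAS^{-1}=B$. Applied to $P$ and $A$ this gives $T\in O(n,\complex)$ with $TAT^{-1}=P$, which is Part~(1). Part~(2) is then immediate: if $n$ is odd and $\det T=-1$, replace $T$ by $(-I_n)T\in SO(n,\complex)$, which still conjugates $A$ to $P$ because $-I_n$ is central.

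For Part~(3) I would, for each $\mathrm{r}(A)\in\{0,1,2,3\}$, enumerate the finitely many elementary-divisor patterns compatible with that rank, using that $\dim\ker A=3-\mathrm{r}(A)$ pins down the Jordan structure at the eigenvalue $0$ (in particular a single block whenever $\mathrm{r}(A)\geq 2$), and then read off $P$ from Part~(1). For instance $\mathrm{r}(A)=2$ forces the $0$-part to be a single block of size $0,1,2$ or $3$, which, paired with the remaining elementary divisors (all with nonzero eigenvalues, of total size $3,2,1$ or $0$), yields exactly the four patterns realized by the four matrices of~\eqref{eq:3list2}; the cases $\mathrm{r}(A)=3,1,0$ are analogous and shorter. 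Two bookkeeping points then finish the proof. First, the order of the diagonal blocks of $P$ is immaterial: a block-permuting permutation matrix is orthogonal with determinant $\pm1$ and, after composing with $-I_3$ if necessary, lies in $SO(3,\complex)$ while still permuting the blocks by centrality; so each pattern has a representative in precisely the displayed shape. Second, one reads off from the definition of $D_k$ that the blocks are as listed, namely $D_2=\twomatrix{\sqrt{-1}}{1}{1}{-\sqrt{-1}}$ and $D_3$ the $3\times3$ matrix whose super/sub-diagonal entries are $1\pm\sqrt{-1}$. Uniqueness of $P$ in the list follows because $SO(3,\complex)$-congruence of symmetric matrices coincides with similarity (since $T^{-1}=T'$), and similarity determines the elementary divisors, hence the family, together with the eigenvalues $\lambda_i$, hence the parameters (as a multiset).

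I expect the only genuine obstacle to be the core lemma of Part~(1) --- that similar complex symmetric matrices are orthogonally similar, via a symmetric square root $R=p(C)$ lying in the commutant of $A$ --- and this is exactly what \cite{Ga} supplies; so the actually new work is the rank-by-rank enumeration of Part~(3), which is routine once the ``reorder the blocks inside $SO(3,\complex)$'' remark is available. The one point to watch there is that every listed constant is nonzero precisely as claimed --- e.g.\ a $D_2$-block truly cannot be absorbed into a diagonal block, since its nilpotent part is nonzero.
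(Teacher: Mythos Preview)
Your proposal is correct, and for Parts~(2) and~(3) it matches the paper's argument essentially verbatim (the paper dismisses Part~(3) in one sentence as a ``detailed enumeration'' with uniqueness following from Jordan forms; you spell out the rank-by-rank bookkeeping and the block-reordering-inside-$SO(3,\complex)$ point more carefully, which is helpful but not a different argument).

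The genuine difference is in Part~(1). The paper does not reprove the underlying lemma at all: it cites Gantmacher's result as a black box, but that result produces $\frac{1}{2}D_{k_i}$ rather than $D_{k_i}$ on the right-hand side. The paper then closes the gap with a one-line scaling trick: apply Gantmacher's statement to $\frac{1}{2}A$ (whose elementary divisors are $(\lambda-\tfrac{1}{2}\lambda_i)^{k_i}$), obtain $T(\tfrac{1}{2}A)T^{-1}=\mathrm{diag}(\tfrac{1}{2}\lambda_iI_{k_i}+\tfrac{1}{2}D_{k_i})$, and multiply through by $2$. You instead reconstruct the mechanism inside the reference --- observing that $D_k$ is symmetric and nilpotent with a single Jordan block, so $P$ is similar to $A$, and then proving directly that similar complex symmetric matrices are orthogonally similar via the polynomial square root $R=p(S'S)$. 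Your route is more self-contained and explains \emph{why} the result holds; the paper's route is shorter because it outsources the heavy lifting and only needs the rescaling observation to adjust the normalization.
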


\begin{proof}
(\ref{it:csm}).
By~\cite[Chapter XI, Corollary 2]{Ga}, for a complex symmetric matrix $A$ with the elementary factors as in the proposition, there exists $T\in O(n,\complex)$ such that \begin{equation}
TAT^{-1}=P:=diag(\lambda_1 I_{k_1}+ \frac{1}{2}D_{k_1},\lambda_2 I_{k_2}+\frac{1}{2}D_{k_2},\cdots,\lambda_t I_{k_t}+\frac{1}{2}D_{k_t}).
\label{eq:matP1}
\end{equation}

Note that
$\frac{1}{2}A$ is a complex symmetric matrix whose
elementary factor is $(\lambda-\frac{1}{2}\lambda_1)^{k_1}, \cdots, (\lambda-\frac{1}{2}\lambda_t)^{k_t}$. Applying
the above result in~\cite{Ga} to $\frac{1}{2}A$, there is $T\in O(n,\complex)$ such that
$$T\frac{1}{2}AT^{-1}=diag(\frac{1}{2}\lambda_1 I_{k_1} +\frac{1}{2}D_{k_1},\cdots, \frac{1}{2}\lambda_tI_{k_t}+\frac{1}{2}D_{k_t}).$$
Hence we have
$$TAT^{-1}=\diag(\lambda_1 I_{k_1} +D_{k_1},\cdots,\lambda_{t} I_{k_t}+D_{k_t}).$$

\noindent
(\ref{it:csmo}).
When $n$ is odd, we can get the matrix $T\in O(n,\mathbb C)$ to be a
matrix $S\in SO(n,\mathbb C)$ by keeping  $T$ if $\det~ T=1$ and replace $T$ by $-T$ if $\det~ T=-1$.

\noindent
(\ref{it:csm3}).
This part is a detailed enumeration of Item~(\ref{it:csmo} except the uniqueness of $P$ which follows since different matrices in the list of Eqs.~(\ref{eq:3list3}) -- (\ref{eq:3list0}) have different Jordan canonical forms.
\end{proof}

\subsection{Case 1 of Theorem~\ref{thm:matclass}: the rank of $A$ is three}

In this case, $A$ is invertible. Then by Eq.~(\ref{eq:mateq}), we obtain
$$(\mathrm{tr}(A)+1)I_3-A=(A')^{-1}A^*=(A^{-1})'A^{-1}\det A.$$
Then
$$A=(\mathrm{tr}(A)+1)I_3-(A^{-1})'A^{-1}\det A.$$
So A is a symmetric matrix. By Proposition~\ref{pp:clale}, we can assume that $A$ is one of the three matrices in Eq.~(\ref{eq:3list3}).

\subsubsection{Case 1.1:} $A=\left (\begin{matrix}\lambda_1&0&0\\ 0&\lambda_2&0\\
0&0&\lambda_3\end{matrix}\right), \lambda_1\lambda_2\lambda_3\neq 0.$

Applying Eq.~(\ref{eq:mateq}) to $A$ and comparing the entries on the main diagonal of the matrices on the two sides, we have
\begin{equation*}
\lambda_1(\lambda_2+\lambda_3+1)=\lambda_2\lambda_3,\quad
\lambda_2(\lambda_3+\lambda_1+1)=\lambda_3\lambda_1,\quad
\lambda_3(\lambda_1+\lambda_2+1)=\lambda_1\lambda_2.
\end{equation*}
Adding two of equations at a time, we obtain
\begin{equation*}
2\lambda_1\lambda_2+\lambda_1+\lambda_2=0, \quad 2\lambda_2\lambda_3+\lambda_2+\lambda_3=0, \quad
2\lambda_3\lambda_1+\lambda_3+\lambda_1=0.
\end{equation*}
Since $\lambda_1\lambda_2\lambda_3\neq 0$, we get
\begin{equation*}
\frac{1}{\lambda_1}+\frac{1}{\lambda_2}+2=0, \quad
\frac{1}{\lambda_2}+\frac{1}{\lambda_3}+2=0, \quad
\frac{1}{\lambda_3}+\frac{1}{\lambda_1}+2=0.
\end{equation*}
Solving this linear system, we obtain $\frac{1}{\lambda_1}=\frac{1}{\lambda_2}=\frac{1}{\lambda_3}=-1$. That is $\lambda_1=\lambda_2=\lambda_3=-1$.
Therefore, $$A=\threematrix{-1}{0}{0}{0}{-1}{0}{0}{0}{-1}.$$
This is indeed a solution of Eq.~(\ref{eq:mateq}).

\subsubsection{Case 1.2:} $A=\threematrix{\lambda_1}{0}{0}{0}{\lambda_2 +\sqrt{-1}}{1}{0}{1}{\lambda_2-\sqrt{-1}}, \lambda_1\lambda_2\neq0$.

Apply Eq.~(\ref{eq:mateq}) to $A$ and comparing entries, we have
\begin{equation*}
2\lambda_1\lambda_2+\lambda_1=(\lambda_2)^2, \quad
(2\lambda_1+1)\sqrt{-1}+(\lambda_2)^2+\lambda_2=0, \quad
2\lambda_1+1=0.
\end{equation*}
From the third equation, we obtain $\lambda_1=-\frac{1}{2}\smallskip.$ Substituting it into the second equation, we have $\lambda_2^2+\lambda_2=0$.
So $\lambda_2$ is $0$ or $-1$, both contradicting the first equation. Thus the equation set does not have any solution and this case does not give any solution of Eq.~(\ref{eq:mateq}).

\subsubsection{Case 1.3:} $A=\threematrix{\lambda}{1+\sqrt{-1}}{0} {1+\sqrt{-1}}{\lambda}{1- \sqrt{-1}}{0}{1-\sqrt{-1}}{\lambda}, \lambda\neq0.$

Applying Eq.~(\ref{eq:mateq}) to $A$ and comparing the $(1,3)$-entries of the two sides, we get $0-2=2$, which is a contradiction. Therefore this case does not give any solution of Eq.~(\ref{eq:mateq}).

\subsection{Case 2 of Theorem~\ref{thm:matclass}: the rank of $A$ is two}
In this case, $A$ is not necessarily symmetric. But $A'A$ is still a symmetric matrix. We will use this observation to relate Eq.~(\ref{eq:mateq}) for $A$ to an equation for $A'A$.

Let $A$ be a solution of Eq.~(\ref{eq:mateq}).
Multiplying $A$ to two sides of  Eq.~(\ref{eq:mateq}) from the right, we have
$A'((\mathrm{tr}{A}+1)I_3-A)A=\det A I_3=0$.
Thus
\begin{equation}
(\mathrm{tr}{A}+1)A'A=A'AA.
\label{eq:21a}
\end{equation}
Therefore, for a solution $A$ of Eq.~(\ref{eq:mateq}), $A'A$ is symmetric and satisfies Eq.~(\ref{eq:21a}).

Furthermore, since $\mathrm{r}(A)=2$ and $\mathrm{r}(A'A)\leq \mathrm{r}(A)$, $\mathrm{r}(A'A)$ is $0$, or $1$ or $2$. However, $\mathrm{r}(A'A)\ne 0$. Otherwise, $\mathrm{r}(A')=2$ would be
the dimension of the solution space of $AX=0$, which is $3-\mathrm{r}(A)=1$, a contradiction. Thus $\mathrm{r}(A'A)=1$ or 2.
Then by Proposition~\ref{pp:clale}, there is $T\in \mathrm{SO}(3,\complex)$ such that $P:=TA'AT'$ is one of the six matrices in Eqs.~(\ref{eq:3list2}) and (\ref{eq:3list1}).

By Theorem~\ref{thm:mat2}, $B:=TAT'$ is also a solution of Eq.~(\ref{eq:mateq}) that is congruent to $A$. Further, multiplying $T$ (resp. $T'$) to the left (resp. right) hand side of Eq.~(\ref{eq:21a}), we find that $B'B=TA'AT'=P$ satisfies Eq.~(\ref{eq:21a}) as well and is congruent to $A'A$.

To summarize, in order to find solutions of Eq.~(\ref{eq:mateq}) of rank 2 up to congruent by $\mathrm{SO}(3,\complex)$, we only need to consider every solution $A$ of Eq.~(\ref{eq:mateq}) such that $A'A$ is one of the six matrices in Eqs.~(\ref{eq:3list2}) and (\ref{eq:3list1}), and satisfies Eq.~(\ref{eq:21a}). We now consider the corresponding six cases separately.

\subsubsection{Case 2.1:} $A'A=\threematrix{\lambda_1}{0}{0}{0} {\lambda_2}{0}{0}{0}{0}, \lambda_1\lambda_2\neq0.$

Substituting $A'A$ into Eq.~(\ref{eq:21a}) and comparing entries in the first two rows, we have
\begin{equation}
a_{12}=a_{13}=a_{21}=a_{23}=0,\quad  a_{11}=a_{22}=\mathrm{tr}(A)+1.
\label{eq:case2.1}
\end{equation}
If $\mathrm{tr}(A)+1=0$, then the first two rows of $A$ are zero. Then $\mathrm{r}(A)\leqslant1$, which is a contradiction to our assumption. So $a_{11}=a_{22}=\mathrm{tr}(A)+1\neq0$. Then, from $\mathrm{r}(A)=2$, we get $a_{33}=0$. Substituting it into Eq.~(\ref{eq:mateq}) and comparing the $(3,3)$-entries of the two sides, we get $0-0=a_{11}a_{22}\neq0$, which is a contradiction. Therefore this case does not give any solution of Eq.~(\ref{eq:mateq}).

\subsubsection{
Case 2.2:} $A'A=\threematrix{\lambda+\sqrt{-1}}{1}{0}{1} {\lambda-\sqrt{-1}}{0}{0}{0}{0}, \lambda\neq0$.

Substituting $A'A$ into Eq.~(\ref{eq:21a}) and comparing entries in the first two rows, we again have Eq.~(\ref{eq:case2.1}). Therefore, as in Case~2.1, the current case does not give any solution of Eq.~(\ref{eq:mateq}).

\subsubsection{Case 2.3:} $A'A=\threematrix{\lambda}{0}{0}{0}{\sqrt{-1}} {1}{0}{1}{-\sqrt{-1}}, \lambda\neq0$.

Substituting the form of $A'A$ into Eq.~(\ref{eq:21a}), we obtain
\begin{equation}
(\mathrm{tr}(A)+1)\threematrix{\lambda}{0}{0}{0} {\sqrt{-1}}{1}{0}{1}{-\sqrt{-1}}=
\threematrix{\lambda}{0}{0}{0} {\sqrt{-1}}{1}{0}{1}{-\sqrt{-1}}A.
\label{eq:23a}
\end{equation}
Since $\lambda\neq0$, we have
$$a_{11}=\mathrm{tr}(A)+1, \quad a_{12}=a_{13}=0.$$
On the other hand, by the assumption of the form of $A'A$ in this case, we obtain
$$\twomatrix{a_{22}}{a_{32}}{a_{23}}{a_{33}}\twomatrix{a_{22}}{a_{23}}{a_{32}}{a_{33}}
=\twomatrix{\sqrt{-1}}{1}{1}{-\sqrt{-1}}.$$
Hence
\begin{equation}
(a_{22})^2+(a_{32})^2=\sqrt{-1},\quad
(a_{23})^2+(a_{33})^2=-\sqrt{-1}, \quad
\det \twomatrix{a_{22}}{a_{23}}{a_{32}}{a_{33}}=0.
\label{eq:23bc}
\end{equation}
From the third equation in Eq.~(\ref{eq:23bc}) and $a_{12}=a_{13}=0$, we find that the first row of $A^*$ is 0.
Thus from Eq.~(\ref{eq:mateq}), we obtain
$$(\mathrm{tr}(A)+1)\threematrix{a_{11}}{a_{21}}{a_{31}}{0}{a_{22}}{a_{32}}{0}{a_{23}}{a_{33}}
-\threematrix{\lambda}{0}{0}{0}{\sqrt{-1}}{1}{0}{1}{-\sqrt{-1}}=
\threematrix{0}{0}{0}{(A^*)_{21}}{(A^*)_{22}}{(A^*)_{23}}{(A^*)_{31}}{(A^*)_{32}}{(A^*)_{33}}.$$
Comparing the (1, 1)-entries, we have $(\mathrm{tr}(A)+1)a_{11}=(\mathrm{tr}(A)+1)^2=\lambda\neq0$. Then $a_{21}=a_{31}=0.$
So from Eq.~(\ref{eq:23a}), we get
$$(\mathrm{tr}(A)+1)\twomatrix{\sqrt{-1}}{1}{1}{-\sqrt{-1}}=
\twomatrix{\sqrt{-1}}{1}{1}{-\sqrt{-1}}\twomatrix{a_{22}}{a_{23}}{a_{32}}{a_{33}}.$$
Thus we obtain
\begin{equation}
(\mathrm{tr}(A)+1)\sqrt{-1}=\sqrt{-1}a_{22}+a_{32},
\quad
-(\mathrm{tr}(A)+1)\sqrt{-1}=a_{23}-\sqrt{-1}a_{33}.
\label{eq:23de}
\end{equation}
It is easy to derive the solutions of  Eq.~(\ref{eq:23bc}) and  Eq.~(\ref{eq:23de}):
\begin{eqnarray*}
a_{22}&=&\frac{1}{2}(\mathrm{tr}(A)+1+\frac{\sqrt{-1}}{\mathrm{tr}(A)+1}); \quad a_{32}=\frac{1}{2}((\mathrm{tr}(A)+1)\sqrt{-1}+\frac{1}{\mathrm{tr}(A)+1}).\\
a_{23}&=&\frac{1}{2}(\frac{1}{\mathrm{tr}(A)+1}-(\mathrm{tr}(A)+1)\sqrt{-1}); \quad a_{33}=\frac{1}{2}((\mathrm{tr}(A)+1)-\frac{\sqrt{-1}}{\mathrm{tr}(A)+1}).
\end{eqnarray*}
Hence $a_{22}+a_{33}=\mathrm{tr} (A)+1$. On the other hand, since $\mathrm{tr}(A)+1=a_{11}$, we have $a_{22}+a_{33}=-1$. Therefore $\mathrm{tr}(A)+1=-1$. So
$$A=\threematrix{-1}{0}{0}{0}{-\frac{1+\sqrt{-1}}{2}}{\frac{\sqrt{-1}-1}{2}}{0}{-\frac{1+\sqrt{-1}}{2}}{\frac{\sqrt{-1}-1}{2}}.$$
It is straightforward to check that it satisfies Eq.~(\ref{eq:mateq}).

\subsubsection{Case 2.4:} $A'A=\threematrix{0}{1+\sqrt{-1}}{0} {1+\sqrt{-1}}{0}{1-\sqrt{-1}}{0}{1-\sqrt{-1}} {0}$.

Substituting $A'A$ into Eq.~(\ref{eq:21a}), we obtain
\begin{equation}a_{21}=a_{23}=0,\quad a_{22}=\mathrm{tr}(A)+1,\quad (a_{12})^2+(a_{32})^2=0.
\label{eq:24a}
\end{equation}
On the other hand, comparing the $(2,2)$-entries on both sides of $A'A$ in its assumed form in this case, we obtain
$$(a_{12})^2+(a_{22})^2+(a_{32})^2=0.$$ So $a_{22}=0$. Thus $\mathrm{tr}(A)+1=0$.
Therefore by  Eq.~(\ref{eq:21a}) in this case again,  we obtain
$$(1+\sqrt{-1},1-\sqrt{-1})\left(\begin{array}{ccc}{a_{11}}&{a_{12}}&{a_{13}}\\{a_{31}}&{a_{32}}&{a_{33}}\end{array}\right)=0.\smallskip$$
Note that $(a_{21},a_{22},a_{23})=0$. So $\mathrm{r}(A)=1$, which is a contradiction to our assumption. Therefore this case does not give any solution of Eq.~(\ref{eq:mateq}).

\subsubsection{Case 2.5:} $A'A=\threematrix{\lambda}{0}{0}{0}{0} {0}{0}{0}{0}, \lambda\neq0$.

Substituting the form of $A'A$ into Eq.~(\ref{eq:21a}), we obtain
$$a_{11}=\mathrm{tr}(A)+1, \quad a_{12}=a_{13}=0.$$
On the other hand, by the assumption of the form of $A'A$ in this case, we obtain
\begin{equation}
\twomatrix{a_{22}}{a_{32}}{a_{23}}{a_{33}}\twomatrix{a_{22}}{a_{23}}{a_{32}}{a_{33}}=0.
\label{eq:25a}
\end{equation}
From Eq.~(\ref{eq:mateq}), we have
\begin{equation}
(\mathrm{tr}(A)+1)\threematrix{a_{11}}{a_{21}}{a_{31}}{0}{a_{22}}{a_{32}}{0}{a_{23}}{a_{33}}
-\threematrix{\lambda}{0}{0}{0}{0}{0}{0}{0}{0}=
\threematrix{0}{0}{0}{(A^*)_{21}}{(A^*)_{22}}{(A^*)_{23}}{(A^*)_{31}}{(A^*)_{32}}{(A^*)_{33}}.
\label{eq:25b}
\end{equation}
Comparing the $(1,1)$-entries of the two sides, we have $(\mathrm{tr}(A)+1)a_{11}=(\mathrm{tr}(A)+1)^2=\lambda\neq0\smallskip$. So $a_{11}\neq0$.
Moreover, comparing the $(1,2)$ and (1,3)-entries of the two sides, we see that $a_{21}=a_{31}=0$.
Thus Eq.~(\ref{eq:25b}) becomes
$$(\mathrm{tr}(A)+1)\threematrix{a_{11}}{0}{0}{0}{a_{22}}{a_{32}}{0}{a_{23}}{a_{33}}
-\threematrix{\lambda}{0}{0}{0}{0}{0}{0}{0}{0}=\threematrix{0}{0}{0}{0}{a_{11}a_{33}}{-a_{11}a_{23}}{0}{-a_{11}a_{32}}{a_{11}a_{22}}$$
Since $a_{11}\neq0$, we obtain
$$a_{22}=a_{33}, \quad  a_{23}=-a_{32}.$$ Since $a_{11}=\mathrm{tr}(A)+1$, we have $a_{22}+a_{33}=-1$. So $a_{22}=a_{33}=-\frac{1}{2}$. Substituting them into Eq.~(\ref{eq:25a}) gives
$$A=\threematrix{a_{11}}{0}{0}{0}{-\frac{1}{2}}{\frac{\sqrt{-1}}{2}}{0}{-\frac{\sqrt{-1}}{2}}{-\frac{1}{2}}\;\;{\rm and}\;\;A=\threematrix{a_{11}}{0}{0}{0}{-\frac{1}{2}}{{-\frac{\sqrt{-1}}{2}}}{0}{\frac{\sqrt{-1}}{2}}{-\frac{1}{2}}.$$
However, since
$$\threematrix{-1}{0}{0}{0}{0}{-1}{0}{-1}{0}
\threematrix{a_{11}}{0}{0}{0}{-\frac{1}{2}}{-\frac{\sqrt{-1}}{2}}{0}{\frac{\sqrt{-1}}{2}}{-\frac{1}{2}}
\threematrix{-1}{0}{0}{0}{0}{-1}{0}{-1}{0}=
\threematrix{a_{11}}{0}{0}{0}{-\frac{1}{2}}{\frac{\sqrt{-1}}{2}}{0}{-\frac{\sqrt{-1}}{2}}{-\frac{1}{2}}, $$
the above two matrices are orthogonal congruent. So every matrix in this case is orthogonal congruent to one of
$$\threematrix{a_{11}}{0}{0}{0}{-\frac{1}{2}} {\frac{\sqrt{-1}}{2}}{0}{-\frac{\sqrt{-1}}{2}} {-\frac{1}{2}},\quad a_{11}\neq0,$$
which satisfies Eq.~(\ref{eq:mateq}) for any $a_{11}\ne 0$. Thus in this case, we obtain a parameterized family of solutions.
Since the trace of a matrix is preserved by any orthogonal congruent operator, the matrices with different values of $a_{11}$ are not congruent.
Therefore different matrices in the family are in different congruency classes.

\subsubsection{Case 2.6:} $A'A=\threematrix{\sqrt{-1}}{1}{0}{1} {-\sqrt{-1}}{0}{0}{0}{0}$.

Substituting $A'A$ into Eq.~(\ref{eq:21a}), we obtain
\begin{equation}
(\mathrm{tr}(A)+1)\threematrix{\sqrt{-1}}{1}{0}{1}{-\sqrt{-1}}{0}{0}{0}{0}=
\threematrix{\sqrt{-1}}{1}{0}{1}{-\sqrt{-1}}{0}{0}{0}{0}A.
\label{eq:26a}
\end{equation}

If $(\mathrm{tr}(A)+1)=0$, then
\begin{equation}
a_{11}=\sqrt{-1}a_{21},\quad a_{12}=\sqrt{-1}a_{22},\quad a_{13}=\sqrt{-1}a_{23}.
\label{eq:26b}
\end{equation}
So$$(A^*)_{11}=-\sqrt{-1}\twodet{a_{12}}{a_{13}}{a_{32}}{a_{33}}=-\sqrt{-1}(A^*)_{12}.$$
However, from Eq.~(\ref{eq:mateq}), we obtain $-A'A=A^*$. So $(A^*)_{11}=\sqrt{-1}(A^*)_{12}$ which is a contradiction. Therefore this case does not give  a solution of Eq.~(\ref{eq:mateq}).

If $(\mathrm{tr}(A)+1)\neq0$, then by Eq.(\ref{eq:26a}), we obtain $a_{13}=\sqrt{-1}a_{23}.$
 On the other hand, by the assumption of the form of $A'A$ in this case, we obtain
$(a_{13})^2+(a_{23})^2+(a_{33})^2=0.$
Hence $a_{33}=0$. Substituting them into Eq.~(\ref{eq:mateq}) and comparing the $(3,3)$-entries, we have
$a_{11}a_{22}-a_{12}a_{21}=0.$
So we have
\begin{align*}
(\sqrt{-1}a_{11}+a_{21})a_{22}-(\sqrt{-1}a_{12}+a_{22})a_{21}=0; \\ (\sqrt{-1}a_{11}+a_{21})a_{12}-(\sqrt{-1}a_{12}+a_{22})a_{11}=0.
\end{align*}
By Eq.~(\ref{eq:26a}) again, we obtain
$$\sqrt{-1}a_{11}+a_{21}=\sqrt{-1}(\sqrt{-1}a_{12}+a_{22})=(\mathrm{tr}(A)+1)\sqrt{-1}.$$
So
$$
(\mathrm{tr}(A)+1)(\sqrt{-1}a_{22}-a_{21})=0; \quad (\mathrm{tr}(A)+1)(\sqrt{-1}a_{12}-a_{11})=0.$$
Hence
\begin{align*}
a_{21}=\sqrt{-1}a_{22};\quad a_{11}=\sqrt{-1}a_{12}.
\end{align*}
By the form of $A'A$ again, we obtain
\begin{align*}
&(a_{11})^2+(a_{21})^2+(a_{31})^2=\sqrt{-1};
\quad
a_{11}a_{12}+a_{21}a_{22}+a_{31}a_{32}=1;
\\
&(a_{12})^2+(a_{22})^2+(a_{32})^2=-\sqrt{-1}.
\end{align*}
Hence we have
$$a_{31}a_{32}+\sqrt{-1}(a_{31})^2=0,\quad (a_{32})^2+(a_{31})^2=0.$$
So $a_{31}=\sqrt{-1}a_{32}$. Thus the last row of $A^*$ is 0. Furthermore from the last row of Eq.~(\ref{eq:mateq}), we obtain
$a_{13}=a_{23}=a_{33}=0.$ Then $\mathrm{r}(A)=1$, which is a contradiction to our assumption. Therefore this case does not give  any solution of Eq.~(\ref{eq:mateq}).

\subsection{Case 3 of Theorem~\ref{thm:matclass}: the rank of $A$ is one}

In this case, $A^*=0$. So Eq.~(\ref{eq:mateq}) becomes
\begin{equation}
(\mathrm{tr}(A)+1)A'=A'A.
\label{eq:3a}
\end{equation}
There are the following six subcases, including two subcases where $A$ is symmetric and four cases where $A$ is not symmetric.

\subsubsection{$A$ is symmetric}

Since $\mathrm{r}(A)=1$, by Proposition~\ref{pp:clale}, we have the following two subcases.
\begin{enumerate}
\item[\em Case 3.1: \hspace{-.6cm}] $\quad A=\threematrix{\lambda}{0}{0}{0}{0}{0}{0}{0}{0}, \lambda\neq0.$

Then from Eq.~(\ref{eq:3a}), we obtain  $(\lambda+1)\lambda=(\lambda)^2$.  So $\lambda=0$ which is a contradiction to our assumption. Thus this case does not give  a solution of Eq.~(\ref{eq:mateq}).

\item[\em Case 3.2: \hspace{-.6cm}] $\quad A=\threematrix{\sqrt{-1}}{1}{0}{1}{-\sqrt{-1}}{0}{0}{0}{0}$

Note that $A'A=0$. Combining it with Eq.~(\ref{eq:3a}), we have $(\mathrm{tr}(A)+1)A'=0$. Since $\mathrm{tr}(A)\neq-1$, we have $\mathrm{tr}(A)+1\neq0$. Then $A'=0$. Thus $A=0$, which is
a contradiction again. Therefore this case does not give a solution of Eq.~(\ref{eq:mateq}).
\end{enumerate}

\subsubsection{
$A$ is not symmetric}
In this case, we apply a strategy similar to Case 2 by relating $A$ to its symmetrizer $\frac{1}{2}(A+A')$.

First note that if $\mathrm{tr}(A)+1\neq0$, then $A$ is symmetric. So by our assumption, we obtain $\mathrm{tr}(A)+1=0$.
Then by Eq.~(\ref{eq:3a}), we also have $A'A=0$.

Let $A$ be a solution of Eq.~(\ref{eq:mateq}). Since $\frac{1}{2}(A+A')$ is symmetric, by Proposition~\ref{pp:clale}.(\ref{it:csm3}), there is $T\in \mathrm{SO}(3,\complex)$ such that $T\frac{1}{2}(A+A')T'$ is one of the matrices in Proposition~\ref{pp:clale}.(\ref{it:csm3}). By Theorem~\ref{thm:mat2}, $TAT'$ is a solution of Eq.~(\ref{eq:mateq}) that is congruent to $A$ and its symmetrizer $\frac{1}{2}(TAT'+(TAT')')=T\frac{1}{2}(A+A')T'$ is one of the matrices in Proposition~\ref{pp:clale}.(\ref{it:csm3}).

Therefore, to find solutions $A$ of Eq.~(\ref{eq:mateq}) with $\mathrm{r}(A)=1$ that is not symmetric, we only need to find from those $A$ such that $\frac{1}{2}(A+A')$ is from the matrices in Proposition~\ref{pp:clale}.(\ref{it:csm3}).

Furthermore, since $\mathrm{r}(A)=1$, we can suppose
$A=(\alpha_1,\alpha_2,\alpha_3)'\cdot (\beta_1,\beta_2,\beta_3), $ where not all $\alpha_i\in \complex$ are zero and not all $\beta_j\in\complex$ are zero.
Then
$$\frac{1}{2}(A+A')=\threematrix{\alpha_1\beta_1}{\frac{1}{2}(\alpha_1\beta_2+\alpha_2\beta_1)}
{\frac{1}{2}(\alpha_1\beta_3+\alpha_3\beta_1)}{\frac{1}{2}(\alpha_1\beta_2+\alpha_2\beta_1)}
{\alpha_2\beta_2}{\frac{1}{2}(\alpha_2\beta_3+\alpha_3\beta_2)}
{\frac{1}{2}(\alpha_1\beta_3+\alpha_3\beta_1)}{\frac{1}{2}(\alpha_2\beta_3+\alpha_3\beta_2)}{\alpha_3\beta_3}.\smallskip$$
If $\mathrm{r}(\frac{1}{2}(A+A'))<2$, then all the $2\times 2$ subdeterminants are 0.
Thus we have
\begin{align*}
(\alpha_1\beta_1)(\alpha_2\beta_2)-(\frac{1}{2}(\alpha_1\beta_2+\alpha_2\beta_1))^2&=0;\quad (\alpha_2\beta_2)(\alpha_3\beta_3)-(\frac{1}{2}(\alpha_2\beta_3+\alpha_3\beta_2))^2=0;\\
(\alpha_3\beta_3)(\alpha_1\beta_1)-(\frac{1}{2}(\alpha_3\beta_1+\alpha_1\beta_3))^2&=0.
\end{align*}
They simplify to
$$\alpha_1\beta_2=\alpha_2\beta_1,\quad \alpha_2\beta_3=\alpha_3\beta_2,\quad \alpha_3\beta_1=\alpha_1\beta_3.$$
Therefore $A$ is symmetric, which is a contradiction to our assumption. Thus  $\mathrm{r}(\frac{1}{2}(A+A'))\geqslant2\smallskip$.
On the other hand, by basic linear algebra, we have
$$\mathrm{r}(\frac{1}{2}(A+A'))=\mathrm{r}(A+A')\leq\mathrm{r}(A)+\mathrm{r}(A').$$
Hence $\mathrm{r}(\frac{1}{2}(A+A'))=2$. Thus by Proposition~\ref{pp:clale}.(\ref{it:csm3}), we only need to consider the following four cases:
\medskip

\noindent
{\em Case 3.3: } $\frac{1}{2}(A+A') =\threematrix{\lambda_1}{0}{0}{0} {\lambda_2}{0}{0}{0}{0}, \lambda_1\lambda_2\neq0.$

In this case, $$a_{11}=\lambda_1\neq0,\quad a_{12}+a_{21}=0,\quad a_{13}+a_{31}=0, \quad a_{33}=0.$$
Suppose $a_{21}=k a_{11}$. Then from $A'A=0$, we obtain
\begin{equation}
A=a_{11} \threematrix{1}{-k}{\mp\sqrt{-(k^2+1)}}{k}{-k^2}{\mp k\sqrt{-(k^2+1)}}{\pm\sqrt{-(k^2+1)}}{\mp k\sqrt{-(k^2+1)}}{k^2+1}.
\label{eq:33a}
\end{equation}
Since $a_{33}=0$, we have $k^2+1=0$, that is, $k=\pm\sqrt{-1}$. Moreover, since $\mathrm{tr}(A)=-1$, $a_{11}=-\frac{1}{2}$. Therefore
$$A=\threematrix{-\frac{1}{2}}{\frac{\sqrt{-1}}{2}}{0}{-\frac{\sqrt{-1}}{2}}{-\frac{1}{2}}{0}{0}{0}{0}\;\;{\rm or}\;\;
A=\threematrix{-\frac{1}{2}}{-\frac{\sqrt{-1}}{2}}{0}{\frac{\sqrt{-1}}{2}}{-\frac{1}{2}}{0}{0}{0}{0}.$$
However, since
$$\threematrix{0}{-1}{0}{-1}{0}{0}{0}{0}{-1}
\threematrix{-\frac{1}{2}}{-\frac{\sqrt{-1}}{2}}{0}{\frac{\sqrt{-1}}{2}}{-\frac{1}{2}}{0}{0}{0}{0}
\threematrix{0}{-1}{0}{-1}{0}{0}{0}{0}{-1}=
\threematrix{-\frac{1}{2}}{\frac{\sqrt{-1}}{2}}{0}{-\frac{\sqrt{-1}}{2}}{-\frac{1}{2}}{0}{0}{0}{0}, $$
the above two matrices are congruent. So, up to orthogonal congruences, we have
$$A=\threematrix{-\frac{1}{2}}{\frac{\sqrt{-1}}{2}}{0}{-\frac{\sqrt{-1}}{2}}{-\frac{1}{2}}{0}{0}{0}{0}.$$
It is straightforward to check that it gives a solution of Eq.~(\ref{eq:mateq}).
\smallskip

\noindent
{\em Case 3.4: }$ \frac{1}{2}(A+A') =\threematrix{\lambda}{0}{0}{0}{\sqrt{-1}}{1}{0}{1}{-\sqrt{-1}}, \lambda\neq0.$

In this case we have $a_{11}=\lambda\neq 0$ and $a_{22}+a_{33}=\sqrt{-1}-\sqrt{-1}=0$. By a similar argument as in Case 3.3, we see that Eq.~(\ref{eq:33a}) holds.
Thus $a_{22}+a_{33}=(-k^2+k^2+1)a_{11}$, implying $a_{11}=0$. This is a contradiction. So this case does not give any solution of Eq.~(\ref{eq:mateq}).

\smallskip

\noindent
{\em Case 3.5: } $ \frac{1}{2}(A+A') =\threematrix{\lambda+\sqrt{-1}}{1}{0}{1}{\lambda-\sqrt{-1}}{0}{0}{0}{0}, \lambda\neq0.$
\smallskip

\noindent

Since $\mathrm{tr}(\frac{1}{2}(A+A'))=-1\smallskip$, we have $\lambda=-\frac{1}{2}.$ So $a_{11}=-\frac{1}{2}+\sqrt{-1}\neq0$. Assume $a_{21}=k a_{11}$.
Since $a_{13}=-a_{31}$ and $A'A=0$, we obtain
$$A=\threematrix{a_{11}}{a_{12}}{\mp\sqrt{-(k^2+1)}a_{11}}{k a_{11}}{k a_{12}}{\mp k\sqrt{-(k^2+1)}a_{11}}{\pm\sqrt{-(k^2+1)}a_{11}}{\pm\sqrt{-(k^2+1)}a_{12}}{(k^2+1)a_{11}}.$$
Moreover, by assumption, we have
$$a_{33}=0,\quad a_{11}=-\frac{1}{2}+\sqrt{-1},\quad a_{22}=-\frac{1}{2}-\sqrt{-1},\quad a_{12}+a_{21}=2.$$ Therefore
$$A=\threematrix{-\frac{1}{2}+\sqrt{-1}}{1-\frac{\sqrt{-1}}{2}}{0}{1+\frac{\sqrt{-1}}{2}}{-\frac{1}{2}-\sqrt{-1}}{0}{0}{0}{0}.$$
It is straightforward to check that it gives a solution of Eq.~(\ref{eq:mateq}).
\smallskip

\noindent

{\em Case 3.6: } $\frac{1}{2}(A+A')=\threematrix{0}{1+\sqrt{-1}}{0}{1+\sqrt{-1}}{0}{1-\sqrt{-1}}{0}{1-\sqrt{-1}}{0}$

In this case, $\mathrm{tr}(\frac{1}{2}(A+A'))=0$ which is a contradiction to our assumption that $\mathrm{tr}(\frac{1}{2}(A+A'))=\mathrm{tr}(A)=-1$. So this case does not give any solution of Eq.~(\ref{eq:mateq}).
\medskip

We have now completed the proof of Theorem~\ref{thm:matclass}.

\section{Appendix: Proof of Lemma~\ref{le:ag}}

Let $\varphi:\mathrm{sl}(2,\complex)\rightarrow \mathrm{sl}(2,\complex)$ be an automorphism satisfying Eq.~(\ref{eq:mor1}). Suppose $T$ is the matrix of $\varphi$ with respect to the basis $\{e_1,e_2,e_3\}$ as in Eq.~(\ref{eq:base}), that is,
$$\varphi(e_i)=\sum_{j=1}^3t_{ij}e_j,\;\;{\rm where}\;\;T=(t_{ij}),\;\;
  t_{ij}\in \complex,i,j=1,2,3.$$
Substituting the above equation into Eq.~(\ref{eq:mor1}), we obtain
\begin{eqnarray*}
\threevec{{[\varphi (e_2),\varphi (e_3)]}}{{[\varphi (e_3),\varphi (e_1)]}}{{[\varphi (e_1),\varphi (e_2)]}}
&=&\threevec{{[t_{21}e_1+t_{22}e_2+t_{23}e_3,\quad t_{31}e_1+t_{32}e_2+t_{33}e_3]}}
{{[t_{31}e_1+t_{32}e_2+t_{33}e_3,\quad t_{11}e_1+t_{12}e_2+t_{13}e_3]}}
{{[t_{11}e_1+t_{12}e_2+t_{13}e_3,\quad t_{21}e_1+t_{22}e_2+t_{23}e_3]}}\\
&=&\threevec
{\twodet{t_{22}}{t_{23}}{t_{32}}{t_{33}}e_1 +\twodet{t_{23}}{t_{21}}{t_{33}}{t_{31}}e_2 +\twodet{t_{21}}{t_{22}}{t_{31}}{t_{32}}e_3
\medskip}
{\twodet{t_{32}}{t_{33}}{t_{12}}{t_{13}}e_1 +\twodet{t_{33}}{t_{31}}{t_{13}}{t_{11}}e_2 +\twodet{t_{31}}{t_{32}}{t_{11}}{t_{12}}e_3 \medskip}
{\twodet{t_{12}}{t_{13}}{t_{22}}{t_{23}}e_1 +\twodet{t_{13}}{t_{11}}{t_{23}}{t_{21}}e_2 +\twodet{t_{11}}{t_{12}}{t_{21}}{t_{22}}e_3}
=(T^*)' \threevec{e_1}{e_2}{e_3}.
\end{eqnarray*}
On the other hand, by Eq.~(\ref{eq:eprod}), we have
$$\threevec{\varphi([e_2,e_3])}{\varphi([e_3,e_1])}{\varphi([e_1,e_2])}=\threevec{\varphi(e_1)}{\varphi(e_2)}{\varphi(e_3)}
=T \threevec{e_1}{e_2}{e_3}.$$
Thus
$$(T^*)'=T.$$
So $T^*=T'$. Thus $$\det T=\det T'=\det T^*=(\det T)^2.$$
Then $\det T=1$, and $$T'=T^*=\det T T^{-1}=T^{-1}.$$
That is, $T T'=T' T=I_3$ and $\det T=1$. So $T\in \mathrm{SO}(3,\complex)$.

Conversely,
let $T=(t_{ij})\in \mathrm{SO}(3,\complex)$.  Define a linear operator $\psi$ on $\mathrm{sl}(2,\complex)$ by
$$\psi(e_i)=\sum_{j=1}^3 t_{ij}e_j.$$
Since $T$ is invertible, $\psi$ is a bijection.
Similar to the calculating above, there is
\begin{eqnarray*}
\threevec{{[\psi(e_2),\psi(e_3)]}}{{[\psi(e_3),\psi(e_1)]}}{{[\psi(e_1),\psi(e_2)]}}
&=&\threevec{{[t_{21}e_1+t_{22}e_2+t_{23}e_3,\quad t_{31}e_1+t_{32}e_2+t_{33}e_3]}}
{{[t_{31}e_1+t_{32}e_2+t_{33}e_3,\quad t_{11}e_1+t_{12}e_2+t_{13}e_3]}}
{{[t_{11}e_1+t_{12}e_2+t_{13}e_3,\quad t_{21}e_1+t_{22}e_2+t_{23}e_3]}}\\
&=&(T^*)'\threevec{e_1}{e_2}{e_3}=T \threevec{e_1}{e_2}{e_3} =\threevec{\psi([e_2,e_3])}{\psi([e_3,e_1])}{\psi([e_1,e_2])}
\end{eqnarray*}
Since $\{e_1,e_2,e_3\}$ is a basis of $\mathrm{sl}(2,\complex)$, we have $[\psi(x),\psi(y)]=\psi([x,y])$ for all $x,y\in \mathrm{sl}(2,\complex)$.
Thus $\psi$ is an automorphism of the Lie algebra $(\mathrm{sl}(2,\complex),[,])$.

\bigskip
{\bf Acknowledgement.}
Chengming Bai thanks the support by NSFC (10920161), NKBRPC (2006CB805905) and SRFDP
(200800550015). Li Guo thanks NSF grant DMS 1001855
for support. Yu Pan and Qing Liu thank Dr. Fuhai Zhu for his valuable discussions.

\end{document}